\documentclass[10pt,a4paper]{amsart}

\allowdisplaybreaks[4]

\usepackage{amsmath,amsfonts,amsthm,amssymb,bbm,microtype,mlmodern,graphicx}

\usepackage{color}

\newtheorem{theorem}{Theorem}
\newtheorem{lemma}{Lemma}
\newtheorem{proposition}{Proposition}
\newtheorem*{claim}{Claim}

\DeclareMathOperator{\supp}{supp}

\newcommand{\Meng}[2]{\left\{#1\mathrel{}\middle|\mathrel{}#2\right\}}
\newcommand{\abs}[1]{\left\lvert#1\right\rvert}
\newcommand{\e}{\varepsilon}
\newcommand{\dd}{\delta}
\newcommand{\ee}{e}

\def\N{{\mathbbm N}}

\begin{document}

\title{Visiting time statistics}

\author{Maxim Kirsebom}
\address{Maxim Kirsebom, University of Bremen, Department 3 – Mathematics, Institute for Dynamical Systems, Bibliothekstr. 5, 28359 Bremen, Germany}
\email{kirsebom@uni-bremen.de}

\author{Philipp Kunde}
\address{Philipp Kunde, Oregon State University, Department of
  Mathematics, Kidder Hall 064, Corvallis, OR 97331, USA}
\email{kundep@oregonstate.edu}

\author{Tomas Persson}
\address{Tomas Persson, Centre for Mathematical Sciences, Lund University, Box 118, 221 00 Lund, Sweden}
\email{tomasp@gmx.com}

\thanks{This research was supported by the Lund--Hamburg funding
  scheme. We thank the University of Hamburg for hospitality.}

\subjclass[2020]{37A50, 37B20, 37E05}

\begin{abstract}
  Many mixing dynamical systems $(X,T,\mu)$ are known to satisfy
  the hitting time statistics result
  \[
    \lim_{r \to 0} \mu \{\, x : \tau_{B(y,r)} (x) > t/\mu(B(y,r))
    \,\} = e^{-t},
  \]
  for $\mu$-almost every $y$, where $\tau_{B(y,r)} (x)$ is the
  first hitting time of $x$ to the ball $B(y,r)$. Taking a
  different point of view, we fix $x$ and consider
  $\tau_{B(y,r)} (x)$ as a function of $y$. We call this the
  visiting time of $y$ from $x$, i.e.\ the time it takes for $y$
  to get a visit from $x$ within a neighbourhood of radius
  $r$. We prove that
  \[
    \lim_{r \to 0} \mu \{\, y : \tau_{B(y,r)} (x) >
    t/\mu(B(y,r)) \,\} = e^{-t},
  \]
  for $\mu$-almost every $x$. As a byproduct we obtain a new
  method of proof for hitting time statistics.
\end{abstract}

\maketitle

\section{Introduction}

Consider an ergodic dynamical systems $(X,T,\mu)$, where, for
simplicity, we assume that $\mu$ is a probability measure. For a
measurable set $A \subset X$ of positive measure, we consider the first
hitting time to $A$, defined by
\[
  \tau_A (x) = \min \{\, n \geq 1 : T^n(x) \in A \,\}.
\]
In this setting, Kac's lemma \cite{Kac} states that
\[
  \int_A \tau_A \, \mathrm{d} \mu = 1.
\]
Hence, the expected value of $\tau_A$ on $A$ is $1/\mu(A)$.

To get more refined information about $\tau_A$, one may consider
the sets
\[
  \Bigl\{\, x \in A : \tau_A (x) \geq \frac{t}{\mu (A)}
  \,\Bigr\}
\]
and
\[
  \Bigl\{\, x \in X : \tau_A (x) \geq \frac{t}{\mu (A)}
  \,\Bigr\}.
\]
In the case that $A$ is a ball, it has been established in many
cases that
\begin{equation}
  \label{eq:returnstatistics}
  \lim_{r \to 0} \frac{1}{\mu (B(y,r))} \mu\Bigl\{\, x \in B(y,r) :
  \tau_{B(y,r)} (x) \geq \frac{t}{\mu (B(y,r))} \,\Bigr\} =
  e^{-t}
\end{equation}
and
\begin{equation}
  \label{eq:hittingstatistics}
  \lim_{r \to 0} \mu\Bigl\{\, x \in X : \tau_{B(y,r)} (x) \geq
  \frac{t}{\mu (B(y,r))} \,\Bigr\} = e^{-t},
\end{equation}
holds for $\mu$-almost every $y$. Results of the type
\eqref{eq:returnstatistics} are called (exponential) return time statistics,
and results of the form \eqref{eq:hittingstatistics} are called (exponential)
hitting time statistics.

A more refined version of \eqref{eq:hittingstatistics} is to
consider the limits
\begin{equation}
  \label{eq:poisson}
  \lim_{r \to 0} \mu \Bigl\{\, x \in X : p = \# \Bigl\{\, j \leq
  \frac{t}{\mu (B(y,r))} : T^j (x) \in B(y,r) \,\Bigr\} \,\Bigr\}
\end{equation}
where $p$ is a natural number. Note that for $p = 0$, we obtain
the limit in \eqref{eq:hittingstatistics}. When the limit in
\eqref{eq:poisson} is equal to $t^p e^{-t} / p!$, we say that the
hits to $B(y,r)$ are Poisson distributed (with intensity 1) in the
limit as $r \to 0$.

We now mention some results on return and hitting time
statistics. Collet \cite{Collet} obtained hitting time statistics
for various one dimensional systems. Pitskel' \cite{Pitskel}
proved that for Markov chains (in particular for subshifts of
finite type), hits are Poisson distributed for almost all $y$,
and in particular, he proved that \eqref{eq:hittingstatistics}
holds for such systems. Hirata \cite{Hirata} proved Poisson
distribution for hitting for Axiom A diffeomorphisms, and Hirata,
Saussol and Vaienti \cite{Hirataetal} extended the results on
Poisson distribution of hits to some systems that are not
exponentially mixing.

Bruin, Saussol, Troubetzkoy and Vaienti \cite{Bruinetal} showed that return
time statistics for a first return map is the same as for the
original system, and they used this to obtain return time
statistics for some smooth interval maps, including some
quadratic maps. Denker, Gordin and Sharova \cite{Denkeretal}
obtained a Poisson distribution law for toral automorphisms.
Freitas, Freitas and Todd \cite{FreitasFreitasTodd} explored the
connections between hitting time statistics and extreme value
laws. Chazottes and Collet \cite{ChazottesCollet} proved Poisson
distribution for hitting, for many non-uniformly hyperbolic
systems, including Young towers.

A survey of results on hitting and return times distributions has
been written by Haydn \cite{Haydn}.

Of a different, but related, type, is the recent result of Holland
and Todd \cite{HollandTodd}. For some interval maps, they
considered the sets
\[
  \Bigl\{\, x \in X : p = \# \Bigl\{\, j \leq \frac{t}{\mu
    (B(x,r))} : T^j (x) \in B(x,r) \,\Bigr\} \,\Bigr\},
\]
that is, the same set as in \eqref{eq:poisson}, but with $y =
x$. Their result is that in the limit as $r \to 0$, the measure
of these sets satisfies a Poisson-like distribution, more
precisely an average of Poisson distributions with different
intensities. As a special case, their result implies that the
limit
\begin{equation}
  \label{eq:recurrencestatistics}
  \lim_{r \to 0} \mu\Bigl\{\, x \in X : \tau_{B(x,r)} (x) \geq
  \frac{t}{\mu (B(x,r))} \,\Bigr\}
\end{equation}
is equal to an average of $e^{-\lambda t}$ over different
$\lambda$. By defining $r(x,\delta)$ by the requirement that
$\mu (B(x, r(x,\delta))) = \delta$ for all $x$, a small
modification of the proof of Holland and Todd implies that
\begin{equation}
  \label{eq:recurrencestatistics2}
  \lim_{\delta \to 0} \mu\Bigl\{\, x \in X : \tau_{B(x,r
    (x,\delta))} (x) \geq \frac{t}{\mu (B(x, r(x,\delta)))}
  \,\Bigr\} = e^{-t}.
\end{equation}
We will refer to results of this nature as \emph{recurrence time statistics}.

To put the limits \eqref{eq:hittingstatistics} and
\eqref{eq:recurrencestatistics2} in a unified context, consider
the set
\[
  E_\delta = \Bigl\{\, (x,y) \in X \times X :
  \tau_{B(y,r(y,\delta))} (x) \geq \frac{t}{\mu
    (B(y,r(y,\delta)))} \,\Bigr\},
\]
which is also illustrated in Figure~\ref{fig:sections}. The hitting time statistics results say that for almost all $y$,
the horizontal section of $E_\delta$ at $y$ has measure which
converges to $e^{-t}$ as $\delta \to 0$, whereas the variant
\eqref{eq:recurrencestatistics2} of Holland's and Todd's result
gives the limit of the measure of the diagonal of $E_{\delta}$.

In cases where hitting time statistics is known it therefore follows that
$\mu \times \mu (E_\delta) \to e^{-t}$ as $\delta \to 0$, and it
is therefore natural to expect that we get the same result as for
hitting time, if we replace horizontal sections by vertical
sections. This indeed turns out to be the case. In this paper
we prove for some exponentially mixing systems on the interval,
that for $\mu$ almost every $x$,
\[
  \lim_{\delta \to 0} \mu \Bigl\{\, y \in X :
  \tau_{B(y,r(y,\delta))} (x) \geq \frac{t}{\mu
    (B(y,r(y,\delta)))} \,\Bigr\} = e^{-t}.
\]
We call this \emph{visiting time statistics}. Hence, for the set
$E_\delta$ we therefore know that, in the limit as
$\delta \to 0$, the diagonal section and almost every horizontal
and almost every vertical section, has measure $e^{-t}$. This is
illustrated in Figure~\ref{fig:sections}.

\begin{figure}
  \includegraphics{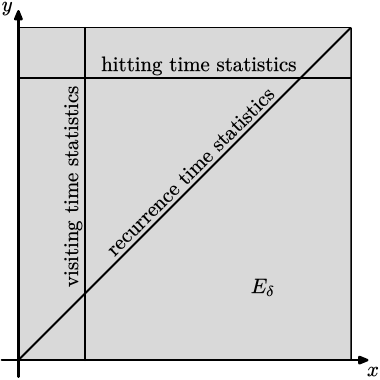}
  \caption{Illustration of how various results relate to the set
    $E_\delta$. Hitting time statistics are related to the
    horizontal sections of $E_\delta$, visiting time statistics
    are related to the vertical sections of $E_\delta$, and the
    result of Holland and Todd is related to the diagonal
    section.}
  \label{fig:sections}
\end{figure}

Our method is based on generating functions, and as a biproduct,
we show how this approach can be used to prove Poisson
distribution results for hitting times.

\section{Visiting time statistics}

We assume that $T \colon [0,1] \to [0,1]$ is piecewise expanding
with at most finitely many branches. More precisely, we assume that $T$ is $C^2$
everywhere except on a set of at most finitely many points and
$|T'|$ is bounded from above and bounded away from 1 on the same
set.  Assume also that $\mu$ is an ergodic invariant measure for
which there is a constant $C > 1$ such that
\begin{equation}
  \label{eq:Lebesgue}
  C^{-1} r < \mu (B(x,r)) < C r
\end{equation}
holds for all $x\in [0,1]$ and all $r < 1$. In this setting it is known \cite{HofbauerKeller, Rychlik}
that $\mu$ is exponentially mixing for $BV$ functions against $L^1$, that is
that for some $\tau, \tilde{C}>0$,
\[
  \biggl| \int f \circ T^n g \, \mathrm{d} \mu - \int f \,
  \mathrm{d} \mu \int g \, \mathrm{d} \mu \biggr| \leq \tilde{C} e^{-\tau
    n} \lVert f \rVert_{L^1} \lVert g \rVert_{BV},
\]
holds for all $n$ and all functions $f$ and $g$ with $\lVert f \rVert_{L^1}, \lVert g \rVert_{BV}<\infty$.

\begin{theorem}[Visiting time statistics]
  \label{the:vts_extension}
  With the assumptions on $T$ and $\mu$ as above, we have for
  $\mu$-almost every $x$ that
  \[
    \lim_{r \to 0} \mu \{\, y : \tau_{B(y,r)} (x) >
    t/\mu(B(y,r)) \,\} = e^{-t},
  \]
  for all $t \ge 0$.
\end{theorem}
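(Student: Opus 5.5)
Fix a typical $x$ and write $N_y = \lfloor t/\mu(B(y,r))\rfloor$. The event in question is that none of $T(x),\dots,T^{N_y}(x)$ lies in $B(y,r)$, i.e.\ $y \notin \bigcup_{j\le N_y} B(T^j x, r)$. By \eqref{eq:Lebesgue} the threshold $N_y$ is comparable to $t/r$ uniformly in $y$. My plan is to discretise: cover $[0,1]$ by short intervals $I$ on which $\mu(B(y,r))$ varies by at most a factor $1\pm\e$, so that $N_y$ is essentially a constant $N_I$ on each $I$. This needs some regularity of $y\mapsto \mu(B(y,r))$. If that is unavailable, I would instead sandwich with $N_I^\pm$ and use that the final answer is continuous in $t$.

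Thus it suffices to show, for a.e.\ $x$, that for each fixed $n \approx s/r$ the quantity
\[
  V_r(x) = \mu\Bigl( I \setminus \bigcup_{j=1}^{n} B(T^j x, r)\Bigr)
\]
is asymptotically $\mu(I)\, e^{-s'}$ with $s' = n\mu(B(y,r))$. I would do this by the method of moments (factorial moments), viewed in $y$. Write the number of visits as $S(x,y)=\sum_{j\le n}\mathbbm 1_{B(y,r)}(T^j x)$. By the inclusion--exclusion/Bonferroni inequalities, $\mu\{y\in I: S=0\}$ is bracketed by alternating sums of
\[
  M_k(x) = \sum_{j_1<\dots<j_k\le n}\mu\bigl(I\cap B(T^{j_1}x,r)\cap\dots\cap B(T^{j_k}x,r)\bigr).
\]
The key step is to show $M_k(x)\to \mu(I) s^k/k!$ for a.e.\ $x$. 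First I would compute $\int M_k\,d\mu(x)$. Each term is a correlation $\int \mathbbm 1\{ |T^{j_i}x - y| < r\ \forall i\}\,d\mu(x)$ integrated over $y\in I$. For tuples with all gaps $j_{i+1}-j_i \ge K\log(1/r)$, iterating exponential mixing (indicators of balls have BV norm $\le 2$) gives a product of $\mu(B(y,r))^k$ up to an error $e^{-\tau K\log(1/r)}$. Summed over tuples, this yields $\mu(I)\,s^k/k!$. Tuples with some short gap contribute $O(n^{k-1}\log(1/r)\cdot r^{k-1}\cdot P(\text{short return}))$, where the short-return factor is $\int \mu(B(y,r)\cap T^{-m}B(y,r))\,d\mu$ for $m\le K\log(1/r)$. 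This is small because for piecewise expanding maps the measure of points returning within $m$ steps to within $2r$ is $O(r)$ summed suitably; for each fixed $m$, the set $\{z: |T^m z - z|<2r\}$ has measure $O(r)$ (plus a contribution from periodic points of period $\le m$). I would prove this bound directly from the expansion and the bounded number of branches.

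Next, almost sure convergence in $x$: I would estimate the variance $\int M_k^2\,d\mu - (\int M_k)^2$ using the same mixing decomposition on $2k$ indices. This should give a bound like $r^{\gamma}$. Along a geometric sequence $r_m = \theta^m$ the Borel--Cantelli lemma then applies, and monotonicity of the events in $r$ and $n$ (with $\theta\to1$) interpolates to all $r$. Countably many $I$, $k$ and rational $s$ suffice, and continuity in $t$ gives all $t\ge0$.

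I expect the main obstacle to be the variance/short-return control uniformly over the $x$-orbit, i.e.\ handling index tuples where $T^{j}x$ and $T^{j'}x$ are close for $|j-j'|$ small. This is exactly the recurrence issue behind the Holland--Todd distinction, and here it must be shown negligible almost surely rather than on average. I would first try a crude bound by $\mu\{z:|T^m z-z|<2r\}\lesssim r\,\#\{\text{branches of }T^m\}$, and hope that the logarithmic range $m\le K\log(1/r)$ keeps $\Lambda^{m}r$ small by choosing $K$ small relative to the mixing rate. If these two requirements on $K$ conflict, the fallback is a generating-function identity for $\sum_k (-1)^k M_k$ that avoids high moments.
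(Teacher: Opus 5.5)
Your skeleton matches the paper's: inclusion--exclusion in $y$, exponential mixing on well-separated indices, a second-moment bound in $x$, Borel--Cantelli along a geometric sequence, and monotone interpolation. But the decisive step, the variance bound (which is most of the paper's work), is only asserted, and the mechanism you sketch for it does not close.

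First, a single threshold $K$ fails in general. Separated indices need $\tau K>1$ so that the mixing error $r^{\tau K}$ is $o(r)$ per factor. Your short-return bound $\mu\{z:\abs{T^mz-z}<2r\}\lesssim r\Lambda^m$ needs $K\log\Lambda<1$, and so does the BV norm $\lesssim\Lambda^m$ of $\mathbbm{1}_{B(y,r)}\cdot\mathbbm{1}_{B(y,r)}\circ T^m$ that you pay when separating a close pair from the rest. These are incompatible as soon as $\tau\le\log\Lambda$, and nothing in the hypotheses prevents that. The missing idea is the paper's three-scale decomposition of gaps ($c_0<\frac14 c_1$, $c_1<\tau^{-1}$, $c_2>4\tau^{-1}$):
\begin{itemize}
\item gaps below a small multiple of $\log n$ are handled by short-return or independence arguments, and the BV cost is harmless because the multiple is small;
\item gaps above $c_2\log n$ give relative error $o(1)$;
\item gaps in between are handled by mixing with the poor error $n^{-\tau c_1}\gg t/n$. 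This is compensated because such tuples form only a fraction $O((\log n)^{1+2\alpha}/n)$ per gap (Lemmas~\ref{lem:firstcounting} and~\ref{lem:secondcounting}).
\end{itemize}

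Second, you misplace the difficulty in $\int M_k^2\,\mathrm{d}\mu$. Close indices coming from the two different copies lie in different balls $B(y,r)$ and $B(z,r)$, with $y,z$ integrated independently, so no recurrence of the orbit is involved. What must be controlled there is the BV cost of the joint indicator, which is absorbed by a small threshold $c_0$. One must also handle the fact that $p$ such pairs sharing the same $y$ and $z$ give only $\lesssim r^{p+1}$ instead of $r^{2p}$ (the paper's estimate of $Q$); this loss is again beaten by the counting factor $(r\log(1/r))^p$. Close indices within one copy are the genuine short returns. But that part of $M_k$ is nonnegative, so an $L^1(\mu)$ bound with a polynomial rate already makes it almost surely negligible along $\theta^m$, by Markov's inequality and Borel--Cantelli. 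There is no separate ``almost surely rather than on average'' problem.

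Your discretisation also does not work as stated. From \eqref{eq:Lebesgue} you only get that $y\mapsto\mu(B(y,r))$ is Lipschitz with an $O(1)$ constant. So it is constant up to a factor $1\pm\e$ only on intervals of length $O(\e r)$, which shrink with $r$; a per-piece limit as $r\to0$ is then meaningless, and you must aggregate over $\sim1/r$ pieces. On fixed intervals, $N_I^+/N_I^-$ need not tend to $1$, so continuity in $t$ does not rescue the sandwich. You would need Lebesgue differentiation and Egorov's theorem to get fixed pieces outside a set of small measure. The paper avoids the issue with the $y$-dependent radius $r_n(y,t)$, where $\mu(B(y,r_n(y,t)))=t/n$, so the time threshold is exactly $n$ for every $y$ (Theorem~\ref{the:vts}). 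Only afterwards does it pass to a fixed radius via Egorov (Section~\ref{sec:vtsproof_extension}).

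Some of your choices are sound. Your averaged short-return estimate, refined branchwise to $\mu\{z:\abs{T^mz-z}<2r\}\lesssim r\Lambda^m/(\lambda^m-1)$ with $\lambda$ the minimal expansion, is a legitimate self-contained substitute for the paper's removal of the bad set $\complement E_n$, which it imports from \cite{Hollandetal}. Working with fixed $k$ and a fixed Bonferroni depth is also simpler than the paper's growing truncation $M_n\approx(\log n)^\alpha$, once the three-scale gap decomposition is in place.
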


We give the proof of Theorem~\ref{the:vts_extension} in
Section~\ref{sec:vtsproof_extension}. However, to prove
Theorem~\ref{the:vts_extension}, we first establish the result in
an alternative form, that we state below and prove in
Section~\ref{sec:vtsproof}.  Before stating this theorem, we
notice that from the assumptions on $\mu$, it follows that for
each natural number $n$ and each $t > 0$, there is a well-defined
function $r_n(\cdot, t)$ such that $\mu (B(y,r_n(y,t))) = t/n$.

\begin{theorem}[Visiting time statistics]
  \label{the:vts}
  With the assumtions on $T$ and $\mu$ as above, we have for
  $\mu$-almost every $x$ that
  \[
    \lim_{n \to \infty} \mu \{\, y : \tau_{B(y,r_n(y,t))} (x) >
    t/\mu(B(y,r_n(y,t))) \,\} = e^{-t},
  \]
  for all $t \ge 0$.
\end{theorem}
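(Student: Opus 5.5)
Since $\mu(B(y,r_n(y,t)))=t/n$, the event in Theorem~\ref{the:vts} is exactly
\[
  \Bigl\{\, y : T^j(x)\notin B(y,r_n(y,t)) \text{ for } 1\le j\le n \,\Bigr\},
\]
so the quantity to control is
\[
  F_n(x)=\mu\{\,y : S_n(x,y)=0\,\}, \qquad S_n(x,y)=\sum_{j=1}^{n}\mathbbm{1}_{A_n}(T^jx,y),
\]
where $A_n=\{(z,y): |z-y|<r_n(y,t)\}$. For each fixed $z$ the section $\{y:(z,y)\in A_n\}$ is an interval around $z$ whose $\mu$-measure is $t/n\,(1+o(1))$, uniformly in $z$, by \eqref{eq:Lebesgue} and continuity of $r\mapsto\mu(B(y,r))$. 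I will use the generating function method: for $s\in[0,1]$ let
\[
  G_n(x,s)=\int s^{S_n(x,y)}\,\mathrm{d}\mu(y),
\]
so that $F_n(x)=G_n(x,0)$. Equivalently, I will work with the factorial moments
\[
  M_{n,k}(x)=\int \binom{S_n(x,y)}{k}\,\mathrm{d}\mu(y)=\sum_{1\le j_1<\dots<j_k\le n}\mu\Bigl(\bigcap_{i} \{\,y: |T^{j_i}x-y|<r_n(y,t)\,\}\Bigr).
\]
The target is $M_{n,k}(x)\to t^k/k!$ for $\mu$-a.e.\ $x$ and each $k$. Once that holds, inclusion--exclusion (Bonferroni inequalities) gives $F_n(x)\to e^{-t}$, the Poisson void probability.

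The plan has three steps.

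\textbf{Step 1 (expectation in $x$).} Integrating over $x$ and using invariance of $\mu$, each term of $M_{n,k}$ becomes the $\mu\times\mu$ measure of the set where $y$ is $r_n$-close to each of $T^{j_1}x,\dots,T^{j_k}x$. I will split the index tuples according to the minimal gap $\min_i (j_{i+1}-j_i)$, using a threshold $g_n = K\log n$.
\begin{itemize}
\item For tuples with all gaps at least $g_n$, I apply exponential mixing repeatedly to $\mathbbm{1}_{B(y,r_n)}$. These indicators have $BV$ norm bounded by a constant, and I condition on $y$, or approximate $y$ by a grid of $O(n^2)$ points to handle the $y$-dependence. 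This gives $(t/n)^k(1+o(1))+O(e^{-\tau g_n})$ per tuple, so these tuples contribute $t^k/k!+o(1)$.
\item For tuples with some short gap $m=j_{i+1}-j_i<g_n$, I need a short-return estimate
\[
  \mu\{\,x : |T^m x-x|<2r\,\}\lesssim r
\]
for $m\le K\log n$, uniformly except for short $m$. For piecewise expanding maps this is standard: the set is a union of at most $(\#\text{branches})^m$ intervals, each of length about $r/|(T^m)'|$. Combined with mixing across the other gaps, this shows the short-gap tuples contribute $o(1)$.
\end{itemize}

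\textbf{Step 2 (concentration in $x$).} I will bound $\mathrm{Var}_x(M_{n,k})$ by again writing $M_{n,k}^2$ as a sum over pairs of tuples and using exponential decay of correlations, now in the $x$ variable. This should give a variance bound of order $n^{-\gamma}$ for some $\gamma>0$; I expect roughly $(\log n)^C/n$ after excluding close pairs. Borel--Cantelli then yields a.e.\ convergence along a polynomial subsequence $n_\ell=\lfloor \ell^{p}\rfloor$.

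\textbf{Step 3 (interpolation).} I will pass from the subsequence to all $n$ by monotonicity. For $n_\ell\le n\le n_{\ell+1}$, the event $\{S_n=0\}$ is sandwiched between the events defined with $n_\ell$ and $n_{\ell+1}$ and slightly perturbed $t$, because the balls are monotone in $t$ and $n_{\ell+1}/n_\ell\to1$. Finally, a.e.\ convergence for all $t$ simultaneously follows from convergence for rational $t$ together with monotonicity in $t$ and continuity of $e^{-t}$.

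The main obstacle is Step 2, specifically controlling the off-diagonal correlations in the variance. The summands depend on $x$ through indicator functions of the form $x\mapsto \mathbbm{1}\{|T^jx-y|<r\}$ composed with $T^j$, and the short-return issue now appears in mixed form between the two tuples. My first attempt will be to integrate out $y$ first, producing a $BV$ function of $T^{j}x$ with bounded $BV$ norm. I will then apply the $BV$-against-$L^1$ mixing inequality along the largest gap in the merged index set. Configurations where all gaps are short will be handled by the counting bound from Step 1, which gives a factor $(\log n)^{C}/n$.
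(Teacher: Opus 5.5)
\textbf{There is a genuine gap in Step 2.} Your outer framework matches the paper's: factorial moments $M_{n,k}(x)=\int S_{k,n}(x,y)\,\mathrm{d}\mu(y)$, mean and variance in $x$, Borel--Cantelli along a sparse subsequence, then sandwiching in $n$ and $t$. Fixing $k$ and using Bonferroni is a legitimate simplification of the paper's truncation at $M_n\approx(\log n)^\alpha$. But Step 2 is essentially the whole content of the theorem, and your sketch does not deliver it.

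\emph{Integrating out $y$ first does not work.} It turns $\prod_i\mathbbm{1}_{B_y}(T^{j_i}x)$ into $\psi(T^{j_1}x,\dots,T^{j_k}x)$, with $\psi$ concentrated near the diagonal. For $k\ge 2$ this is not of the form $g\circ T^{j}$ with $g$ of bounded variation: as a function of $T^{j_1}x$, its variation grows with the number of branches of $T^{j_k-j_1}$. So the BV--$L^1$ inequality cannot be applied to it. Cutting at a single largest gap of the merged sequence also does not decouple interleaved tuples $j,j'$. What works, and what the paper does, is to freeze $y$ and $z$, split at \emph{every} gap of the merged sequence, and only then integrate over $(y,z)$.

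\emph{You target the wrong configurations.} The troublesome pairs are not those with all gaps short. They are those in which some index of $j$ lies within $O(\log n)$ of an index of $j'$, while other gaps are long. There are about $n^{2k-p}(\log n)^p$ pairs with $p$ such cross coincidences. You therefore must show that each contributes $\lesssim (c/n)^{2k-p}n^{-\gamma}$, and counting alone does not give this.

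The paper gets this bound from two ingredients that are missing from your plan.

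First, each cross-close pair $\mathbbm{1}_{B_y}(T^{l_i}x)\mathbbm{1}_{B_z}(T^{l_{i+1}}x)$ is used as a single BV observable of norm $\le B^{l_{i+1}-l_i}$. This forces a separate small threshold $c_0\log n$, chosen so that $n^{c_0\log B}$ is beaten by the mixing gain $n^{-\tau c_1}$ across the following gap; it is in addition to the thresholds $c_1$ and $c_2>4/\tau$. A single threshold $K\log n$, as in your plan, cannot work, because $B^{K\log n}=n^{K\log B}$ may swamp $n^{-\tau K}$. The same problem already arises in your Step 1, when you combine a short-return bound with mixing across the other gaps.

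Second, after splitting, the product of factors is expanded and the $y$- and $z$-integrals are done first. All the points $T^{a_i}x_i$ must then lie within $2Ct/n$ of one another, which gives $Q\le c(c/n)^{r+1}$ for $r$ coincident pairs. This is exactly the extra factor $1/n$ you need.

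The paper also sidesteps within-tuple short gaps altogether by restricting $y$ to $E_n$ (Lemma~\ref{lem:shortreturn}). On $E_n$ those terms vanish identically in $x$, so no rate for $\mu(E_n)\to 1$ is required.

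If you keep your route instead, your short-return justification is incorrect as stated. With $N$ branches and expansion at least $\lambda$, $N^m$ intervals of length about $r/|(T^m)'|$ only give $\lesssim (N/\lambda)^m r$, not $\lesssim r$. You would have to combine this bound for $m\le\varepsilon\log n$ with a Lasota--Yorke bound $\lesssim\theta^m+r$ (for some $\theta<1$) for larger $m$. That makes the short-gap part polynomially small, which you need for Borel--Cantelli.
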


\subsection{Hitting time statistics}

Our method to prove Theorem~\ref{the:vts} makes it possible to
give a simple and direct proof of hitting time statistics. We
shall prove the following theorem.

\begin{theorem}[Hitting time statistics]
  \label{the:hts}
  Let $([0,1],T,\mu)$ be exponentially mixing for bounded
  variation against $L^1$, and suppose that $y \in \supp \mu$ is
  a point such that there are constants $c_1, r_0 > 0$ for which
  \begin{equation}\label{eq:NoReturnCond}
  B(y,r) \cap T^k B(y,r) = \emptyset,
  \end{equation}
  for all $k \leq - c_1 \log r$ and all $r < r_0$.

  Then
  \[
    \lim_{r \to 0} \mu \{\, x : \tau_{B(y,r)} (x) > t / \mu
    (B(y,r)) \, \} = e^{-t}
  \]
  for all $t \geq 0$.
\end{theorem}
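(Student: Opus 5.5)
Write $B = B(y,r)$, $\mu_r = \mu(B)$ and $N = \lfloor t/\mu_r \rfloor$. The event $\{\tau_B > t/\mu_r\}$ is the event that the orbit segment $T x, \dots, T^N x$ avoids $B$. Since the paper advertises generating functions, I would work with the counting variable
\[
S_N(x) = \sum_{j=1}^N \mathbbm{1}_B(T^j x)
\]
and show that its law converges to Poisson with parameter $t$. It suffices to show that the factorial moments converge,
\[
\int \binom{S_N}{p}\, \mathrm{d}\mu \to \frac{t^p}{p!}\quad\text{for every } p .
\]
Equivalently, the generating function $\int (1+s)^{S_N}\,\mathrm{d}\mu$ converges to $e^{st}$. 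The Poisson law is determined by its moments, so this gives $\mu\{S_N = 0\} \to e^{-t}$, which is the statement of Theorem~\ref{the:hts}. Alternatively one can use Bonferroni inequalities: truncate the inclusion--exclusion sum at even and odd $p$ and let $p \to \infty$ after $r \to 0$.

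The $p$-th factorial moment is
\[
\sum_{1 \le j_1 < \dots < j_p \le N} \mu\bigl(T^{-j_1}B \cap \dots \cap T^{-j_p} B\bigr).
\]
Fix $K = \lfloor c_1 \lvert\log r\rvert \rfloor$ and split the index tuples into two classes.

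\emph{Well separated tuples} ($j_{i+1} - j_i > K'$ for all $i$, where $K' = A\lvert\log r\rvert$ with $A$ large). Here I apply exponential mixing for BV against $L^1$ inductively, peeling off one indicator at a time. Write $g = \mathbbm{1}_B \cdot (\mathbbm{1}_B \circ T^{g_1}) \cdots$. The variation of such a product of indicators of intervals pulled back is not obviously bounded, so I would instead peel off from the left: take $g = \mathbbm{1}_B$, whose BV norm is bounded by 3, and let $f$ be the remaining product composed with $T^{\text{gap}}$, whose $L^1$ norm is the measure of the remaining intersection. Each step then has error $\tilde C e^{-\tau K'} \cdot \mu(\text{rest}) \le \tilde C r^{\tau A}\mu(\text{rest})$, so the intersection measure equals $\mu_r^p(1+O(r^{\tau A}/\mu_r))$. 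By \eqref{eq:Lebesgue}-type regularity, or simply because $\mu_r \ge r^{D}$ for some $D$ (which needs justification; I would ideally assume or derive polynomial lower bounds on $\mu_r$ near $y \in \supp\mu$), choosing $A$ large makes this $\mu_r^p(1+o(1))$. The number of such tuples is $\binom{N}{p}(1+o(1))$, which gives $t^p/p!$.

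\emph{Tuples with some gap $\le K'$.} The no-return condition \eqref{eq:NoReturnCond} makes gaps $\le K$ contribute zero, since $T^{-j}B \cap T^{-j-k}B = \emptyset$ when $B\cap T^{-k}B = \emptyset$. (The condition is stated with forward images, and $B \cap T^kB = \emptyset$ implies $T^{-k}B \cap B = \emptyset$.) The main obstacle is the intermediate gaps $K < k \le K'$, because $K$ only has the fixed constant $c_1$ while $K'$ needed $A$ large. For these I would bound $\mu(B \cap T^{-k} B)$ by mixing with a partial gap. Write $k = k_1 + k_2$ with $k_1 \approx K$ and use that $T^{-k_1}B$ has small measure: mixing gives $\mu(B\cap T^{-k}B) \le \mu_r^2 + \tilde C e^{-\tau k}\mu_r$. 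Summed over the $\le K'$ values of $k$ and the $N^{p-1}$ positions, this is $O(N^{p-1}\mu_r^{p-1}\cdot K'(\mu_r^2 + e^{-\tau K}\mu_r)) = O(\lvert\log r\rvert\, r^{\tau c_1})\to 0$. This is fine, and it avoids the need for large $A$ entirely if I redo the separated case with $K$ in place of $K'$, provided $\tau c_1$ beats the polynomial lower bound exponent of $\mu_r$. If it does not, I would iterate the mixing estimate inside the cluster of close indices, bounding the cluster measure by $\mu_r$ times a geometric sum. Finally I would collect the estimates, pass to the limit for each fixed $p$, and conclude via Bonferroni.
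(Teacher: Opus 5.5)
This is essentially the paper's proof. It uses the same inclusion--exclusion (factorial-moment) expansion, with \eqref{eq:NoReturnCond} discarding every tuple that has a gap of at most $c_1\lvert\log r\rvert$, and mixing applied by peeling $\mathbbm{1}_B$ off from the left as the BV factor for well-separated gaps. Intermediate gaps are controlled, as in the paper, by $\mu(B\cap T^{-k}B)\le\mu_r^2+3\tilde C e^{-\tau k}\mu_r$ against only logarithmically many offsets. Your fixed-order Bonferroni truncation and your direct choice $N=\lfloor t/\mu_r\rfloor$ are harmless simplifications of the paper's growing truncation $M_n\approx(\log n)^\alpha$ and its $r_n$-plus-extension step.

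The polynomial lower bound on $\mu_r$ you hedge on is unnecessary, and the paper tacitly relies on the same comparison when it applies \eqref{eq:NoReturnCond} to gaps up to $c_1\log n$ rather than $-c_1\log r_n$. To remove it, put the upper threshold at $K'=A\log(1/\mu_r)$ with $A>1/\tau$, which is the paper's $c_2\log n$, and sum $\tilde C e^{-\tau k}$ geometrically over $k>K$. Then the relative mixing error on well-separated tuples is $O(\mu_r^{\tau A-1})$, and intermediate gaps cost $O(K'\mu_r+e^{-\tau K})\to 0$. Note your displayed intermediate-gap bound carries one factor $\mu_r$ too many, and this corrected form keeps the $K'\mu_r$ term. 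That term, and the count $\binom{N}{p}(1+o(1))$, both need $K'\mu_r\to0$, which holds because $\mu_r\le 1/(K+1)$: the sets $T^{-i}B$, $0\le i\le K$, are pairwise disjoint.
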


Let $T_a = ax(1-x)$ where $a$ is a Benedicks--Carlesson parameter
and $\mu$ is the invariant measure which is absolutely continuous
with respect to Lebesgue measure. Then exponential mixing for
bounded variation against $L^1$ is known, see
\cite{Young92}. Suppose $y = \frac{1}{2}$, then
$|y - T^k (y)| > c e^{-\gamma k}$ for some constants $c$ and
$\gamma$ \cite[Section~6]{BenedicksCarleson}. Since
$T^k B(y,r) \subset B(T^k (y), 4^k r)$, we have
\eqref{eq:NoReturnCond} as long as
$r (1 + 4^k) < c e^{-\gamma k}$. That is, as long as
$k \leq - c_1 \log r$, for some $c_1 > 0$. In conclusion,
Theorem~\ref{the:hts} applies for $y = \frac{1}{2}$.

With a little bit more effort, we can prove the following
theorem.
\begin{theorem}
  \label{the:poisson}
  With the same assumptions as in Theorem~\ref{the:hts}, we get a
  Poisson distribution for the number of hits. For any integer $p
  \geq 0$,
  \[
    \lim_{r \to 0} \mu \{\, x : p = \# \{\, j \leq t/\mu(B(y,r))
    : T^j (x) \in B(y,r) \,\} \,\} = \frac{t^p e^{-t}}{p!} .
  \]
\end{theorem}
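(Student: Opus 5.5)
We prove Theorem~\ref{the:poisson} by the method of factorial moments, i.e.\ via the generating function of the counting variable. Write $B = B(y,r)$, $N = \lfloor t/\mu(B) \rfloor$, and
\[
  S_N(x) = \sum_{j=1}^{N} \mathbbm{1}_B (T^j x).
\]
Since the Poisson law is determined by its moments, and factorial moments $\mathbb{E}\binom{S_N}{k}$ converging to $t^k/k!$ for every $k$ imply convergence of the distribution of $S_N$ to Poisson($t$), it suffices to show that for each fixed $k \ge 1$
\[
  \sum_{1 \le j_1 < \dots < j_k \le N} \mu\bigl( T^{-j_1}B \cap \dots \cap T^{-j_k} B\bigr) \to \frac{t^k}{k!}.
\]
Alternatively, to keep the argument self-contained without moment-determinacy, we use inclusion–exclusion with Bonferroni inequalities. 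We have
\[
  \mu\{S_N = p\} = \sum_{k \ge p} (-1)^{k-p} \binom{k}{p} \mathbb{E}\binom{S_N}{k},
\]
and truncating at even or odd $k$ gives upper and lower bounds. Once the factorial moments converge, letting the truncation level tend to infinity yields $t^p e^{-t}/p!$.

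To estimate the $k$-fold sum, fix a gap $g = \lceil -c_1 \log r \rceil$ and split the tuples $j_1<\dots<j_k$ into two classes. In the \emph{well-separated} class all consecutive gaps $j_{i+1}-j_i$ exceed $g' := \lceil K \log(1/r)\rceil$ for a large constant $K$. In the \emph{clustered} class some gap is at most $g'$.

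For well-separated tuples we apply the exponential mixing $k-1$ times, peeling off the innermost indicator. Take $f = \mathbbm{1}_B$, which has $L^1$ norm $\mu(B)$, and $g = \mathbbm{1}_B \cdot (\text{product of earlier indicators composed with } T)$. Here the BV norm is the obstacle: the product $\mathbbm{1}_B \cdot \mathbbm{1}_B\circ T^{m}$ can have BV norm growing with the number of monotone pieces of $T^m$. We therefore order the peeling so that the BV function is always a single indicator $\mathbbm{1}_B$, with BV norm at most $3$, and the $L^1$ function carries the rest. Concretely, write
\[
  \mu(T^{-j_1}B\cap\dots) = \int \mathbbm{1}_B \cdot (F\circ T^{j_2-j_1})\,\mathrm{d}\mu,
\]
where $F = \mathbbm{1}_B \prod_{i \ge 3} \mathbbm{1}_B\circ T^{j_i-j_2}$. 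By induction this has $L^1$ norm about $\mu(B)^{k-1}$. Each step costs an error $\tilde C e^{-\tau g'} \mu(B)^{k-1}$, which is $\le r^{K\tau}\mu(B)^{k-1}$. Summing over at most $N^k \approx t^k \mu(B)^{-k}$ tuples gives an error $O(r^{K\tau - 1})\to 0$ for $K$ large. The main terms give $\#\{\text{tuples}\}\,\mu(B)^k$. Because the clustered tuples are only a fraction $O(g' k^2/N) = O(r\log(1/r))$ of all tuples, this main contribution tends to $\binom{N}{k}\mu(B)^k \to t^k/k!$.

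For clustered tuples, the hypothesis \eqref{eq:NoReturnCond} kills every tuple with a gap at most $c_1\log(1/r)$: $T^{-j}B\cap T^{-j-m}B = T^{-j}(B\cap T^{-m}B)$, and $B\cap T^{-m}B=\emptyset$ whenever $B\cap T^mB=\emptyset$. The remaining worry is gaps $m$ between $c_1\log(1/r)$ and $g'$, and this is the step I expect to be the main obstacle. Here we bound
\[
  \mu(B\cap T^{-m}B) \le \mu(B)^2 + \tilde C e^{-\tau m}\mu(B)\cdot 3 \le \mu(B)^2 + C r^{1+c_1\tau}.
\]
Combined with the other indices, the total contribution of such tuples is at most
\[
  N^{k-1}\cdot g'\cdot \mu(B)^{k-2}\bigl(\mu(B)^2 + C r^{1+c_1\tau}\bigr) \lesssim r^{c_1\tau}\log(1/r) + r\log(1/r) \to 0.
\]
The other indices are handled by decorrelating as in the separated case, or simply by a crude bound that drops the other indicators except one per cluster. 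If this crude bookkeeping fails for clusters of several close indices, we iterate the same estimate inside each cluster, using that each extra index within a cluster contributes a factor at most $g'\cdot r^{c_1\tau}$.

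Combining the two classes gives the factorial-moment limits, and hence the Poisson law for every $p$. The case $p=0$ recovers Theorem~\ref{the:hts}.
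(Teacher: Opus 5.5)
Your argument is correct, and its core is the same as the paper's. Gaps at most $c_1\log(1/r)$ are killed by \eqref{eq:NoReturnCond}. Exponential mixing is applied with the BV norm always on a single indicator $\mathbbm{1}_B$ and the $L^1$ norm on the remaining product. Tuples containing intermediate gaps are shown to be negligible by counting, which is the paper's bound on $P_k$ for $k\in\hat K_{m,n,p}$ combined with Lemma~\ref{lem:firstcounting}.

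Where you genuinely differ is in passing from factorial moments to the distribution. The paper encodes everything in $\phi_n(z)$ and uses the exact identity $a_{n,p}=\sum_{j\ge p}b_{n,j}(-1)^p\binom{j}{p}$, where $(-1)^jb_{n,j}=\int\binom{S_n}{j}\,\mathrm{d}\mu$ and $S_n$ counts hits up to time $n$. This forces it to control $b_{n,j}$ for all $j\le n$: it uses \eqref{eq:b-d-estimate} for $j\le n^{1/4}$ and $|b_{n,j}|\le(1+c)t^jn^{-j}\binom{n}{j}$ beyond that. You need only the limit of each fixed factorial moment. The Bonferroni truncations at $k=p+q$ (an upper bound for $q$ even, a lower bound for $q$ odd) bracket $\mu\{S_N=p\}$, so no tail estimate is required. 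This is the more robust route. The paper's uniform-in-$j$ requirement goes beyond the range $m\le M_n\approx(\log n)^\alpha$ in which \eqref{eq:Pkupperbound} and \eqref{eq:Pklowerbound} were derived, whereas your fixed-$k$ estimates are fully covered by the same kind of argument. What the paper's route buys, granted those bounds, is an explicit rate $C_pn^{-\min(1/2,\tau c_1-\varepsilon)}$. Working directly with $r\to0$ and $N=\lfloor t/\mu(B)\rfloor$ also spares you the normalization $\mu(B(y,r_n))=t/n$ and the extension argument of Section~\ref{sec:vtsproof_extension}.

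Three remarks on the clustered tuples and the normalization.

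First, the step you flag as the main obstacle is not one. With the BV norm on the leftmost indicator, each gap $m$ contributes a factor $\mu(B)+3\tilde Ce^{-\tau m}$, however the gaps are arranged. There are at most $\binom{k-1}{p}N^{k-p}(g')^p$ tuples with $p$ intermediate gaps, and they contribute $O\bigl((g'\mu(B)+g'r^{c_1\tau})^p\bigr)\to0$. This is exactly your ``iterate inside each cluster'' fallback.

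Second, the other option you offer, keeping one indicator per cluster, fails and should be discarded. A cluster of $s\ge2$ indices has about $N(g')^{s-1}$ placements but would then carry only a factor $\mu(B)$, giving $t(g')^{s-1}\to\infty$.

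Third, the bounds $O(r^{K\tau-1})$ and $Cr^{1+c_1\tau}$ use $\mu(B(y,r))\asymp r$, which is not a hypothesis of Theorem~\ref{the:hts}. The paper tacitly needs something comparable when, in proving Theorem~\ref{the:hts}, it uses \eqref{eq:NoReturnCond} to discard all gaps $\le c_1\log n$. You can avoid the assumption entirely:
\begin{itemize}
\item take $g'=\lceil\frac{2}{\tau}\log(1/\mu(B))\rceil$, so the relative error on well-separated tuples is $O(\mu(B))$;
\item sum $e^{-\tau m}$ over intermediate gaps as a geometric series, so each such gap costs at most $g'\mu(B)+Cr^{c_1\tau}\to0$.
\end{itemize}
Then the only property of the ball measure you use is $\mu(B(y,r))\to0$.
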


If $(M,T,\mu)$ is a piecewise expanding higher dimensional system
satisfying the assumptions of Saussol \cite[Theorem~5.1]{Saussol}
or Thomine \cite[Theorem~2.4]{Thomine}, then $\mu$ is
exponentially mixing for $BV$ against $L^1$, and
Theorems~\ref{the:hts} and \ref{the:poisson} offer a new strategy of proof for hitting time and Poisson statistics in cases where assumption \eqref{eq:NoReturnCond} can be verified. As
far as we know, hitting time statistics and Poisson distributions
are not known for this class of systems.

\section{Proof of Theorem~\ref{the:vts}}
\label{sec:vtsproof}

Throughout the proof, $c$ will denote a generic constant whose value may vary from line to line or even appear multiple times in the same line while representing different numerical values.

We fix $t > 0$ and $n$. Note that $t$ will on occasions be integrated into the generic constant $c$ when the $t$-dependence is not important. Denote $B_y := B(y,r_n (y,t))$,
\begin{equation}\label{eq:A_n}
  A_n := \biggl\{\, (x,y) : \tau_{B_y} (x) > \frac{t}{\mu (B_y)}
  \,\biggr\}
\end{equation}
and
\[
  A_{n,x} := \biggl\{\, y : \tau_{B_y} (x) > \frac{t}{\mu (B_y)}
  \,\biggr\}.
\]
Note that $A_n=\{ (x,y) : \tau_{B_y} (x) > n \}$ and 
\begin{equation*}
	\mu(A_{n,x})=\int \mathbbm{1}_{A_n}(x,y)\, \mathrm{d} \mu (y)=: f_n(x).
\end{equation*}
Note also that in this notation, Theorem~\ref{the:vts} states that for $\mu$-a.e.\ $x\in [0,1]$
\begin{equation*}
	\lim_{n\to\infty} \mu(A_{n,x})=\lim_{n\to\infty} f_n(x)=e^{-t},
\end{equation*}
for all $t>0$. We recognize that
\[
  \mathbbm{1}_{A_n} (x,y) = \prod_{k=1}^n (1 - \mathbbm{1}_{B_y}
  (T^k (x))),
\]
and by expanding the product, we get
\begin{align}\label{eq:ExpProduct}
  \mathbbm{1}_{A_n} (x,y)
  &= 1 - \sum_{1 \leq k_1 \leq n}
    \mathbbm{1}_{B_y} (T^{k_1} (x)) + \sum_{1 \leq k_1 < k_2 \leq n}
    \mathbbm{1}_{B_y} (T^{k_1} (x)) \mathbbm{1}_{B_y} (T^{k_2}
    (x)) \nonumber\\ & \hspace{4cm} + \ldots + (-1)^n \mathbbm{1}_{B_y}
              (T^{1} (x)) \ldots \mathbbm{1}_{B_y} (T^n (x))\nonumber \\
  &= 1 + \sum_{m = 1}^n (-1)^m S_{m,n} (x,y),
\end{align}
where
\[
  S_{m,n} (x,y) = \sum_{1 \leq k_1 < \ldots < k_m \leq n}
  \mathbbm{1}_{B_y} (T^{k_1} (x)) \ldots \mathbbm{1}_{B_y}
  (T^{k_m} (x)).
\]
Hence the quantity that we need to estimate the limit of is given by
\[
  f_n (x) = 1 + \sum_{m=1}^{n} (-1)^m \int S_{m,n} (x,y) \,
  \mathrm{d} \mu (y).
\]

For a sequence $1 \leq k_1 < k_2 < \ldots < k_m \leq n$, we write
$k = (k_1, \ldots, k_m)$, and we let $K_{m,n}$ denote the set of all
such sequences. We consider the set
\[
  \tilde{K}_{m,n} := \{\, k \in K_{m,n}: k_{j+1} - k_j > c_1 \log
  n \, \}.
\]

\begin{lemma}
  \label{lem:shortreturn}
  If we choose $c_1$ small enough, then there exists a sequence of sets $E_n$ such
  that $\mu(E_n) \to 1$ as $n \to \infty$, and if $y \in E_n$ and $k \in K_{m,n} \setminus \tilde{K}_{m,n}$ for any $m\leq n$, then,
  \[
    \mathbbm{1}_{B_y} (T^{k_1} (x)) \ldots \mathbbm{1}_{B_y}
    (T^{k_m} (x)) = 0
  \]
  for all $x\in [0,1]$.
\end{lemma}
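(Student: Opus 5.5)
The plan is to reduce the lemma to a short-return statement about the balls $B_y$ themselves. Suppose $k \in K_{m,n}\setminus\tilde K_{m,n}$. Then there is an index $j$ with $1\le k_{j+1}-k_j =: \ell \le c_1\log n$. If the product were nonzero for some $x$, then $z:=T^{k_j}(x)\in B_y$ and $T^{\ell}(z)\in B_y$, so $B_y\cap T^{-\ell}B_y\neq\emptyset$. It therefore suffices to construct $E_n$ with $\mu(E_n)\to1$ such that
\[
  B_y\cap T^{-\ell}B_y=\emptyset \quad\text{for all } 1\le \ell\le c_1\log n,\ y\in E_n.
\]
This condition does not involve $x$ or $m$, which is exactly why the lemma holds for all $x$ and all $m\le n$. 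Note that $r_n(y,t)\asymp t/n$ by \eqref{eq:Lebesgue}. So $B_y\subset B(y,\rho_n)$ with $\rho_n:=Ct/n$, and it is enough to control the larger balls $B(y,\rho_n)$.

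To define $E_n$, I take the complement of the bad set
\[
  F_n:=\bigcup_{\ell=1}^{\lfloor c_1\log n\rfloor}F_{n,\ell},\qquad F_{n,\ell}:=\{\,y: B(y,\rho_n)\cap T^{-\ell}B(y,\rho_n)\neq\emptyset\,\}.
\]
If $y\in F_{n,\ell}$, there is $z$ with $|z-y|<\rho_n$ and $|T^\ell z-y|<\rho_n$. I then want to say that $y$ lies near a point $w$ with $|T^\ell w-w|$ small. Since $T^\ell$ is piecewise expanding with at most $N^\ell$ branches, where $N$ is the number of branches of $T$, the expansion helps. On each monotone branch $I$ of $T^\ell$, the set $\{w\in I:|T^\ell w-w|<3\rho_n\}$ has length at most $6\rho_n/(\lambda^\ell-1)\le c\rho_n$, with $\lambda>1$ the lower bound for $|T'|$. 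The bound uses that $T^\ell(w)-w$ has derivative of modulus at least $\lambda^\ell-1$ on $I$. This needs $\ell$ large enough that $\lambda^\ell>2$, say; the finitely many small $\ell$ can be handled similarly, or by passing to a power of $T$. Hence the set of such $y$, which lie within $\rho_n$ of that set, has Lebesgue measure at most $c\,N^\ell\rho_n$. By \eqref{eq:Lebesgue} its $\mu$-measure is at most $cN^\ell/n$, with $\mu$ comparable to Lebesgue. Points near branch endpoints of $T^\ell$ need a separate look: the branch where $z$ lies may differ from the one containing $y$. Working with $z$ directly, the set of such $z$ has measure $\le cN^\ell\rho_n$, and $y$ lies within $\rho_n$ of it, which only adds $c N^\ell\rho_n$ more.

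Summing over $\ell\le c_1\log n$ gives
\[
  \mu(F_n)\le \frac{c}{n}\sum_{\ell\le c_1\log n}N^\ell\le c\,n^{c_1\log N-1}.
\]
This tends to $0$ once $c_1<1/\log N$, which is how $c_1$ is chosen small. Then $E_n:=F_n^c$ satisfies $\mu(E_n)\to1$, and the reduction in the first paragraph finishes the proof.

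The main obstacle I expect is the per-branch length bound. The issue is making the bound "at most $c\rho_n$ per branch of $T^\ell$" rigorous uniformly in $\ell$, given that the expansion constant is only bounded away from $1$, so small $\ell$ may not give $\lambda^\ell-1\ge 1$. The fix I would try first is the following. On a branch where $T^\ell(w)-w$ is monotone with derivative at least $\lambda^\ell-1\ge\lambda-1>0$, the preimage of an interval of length $6\rho_n$ has length at most $6\rho_n/(\lambda-1)$. This gives a constant independent of $\ell$, which is all that is needed.
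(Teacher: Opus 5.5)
Your proof is correct, but it takes a different route from the paper. The reduction to $B_y\cap T^{-\ell}B_y=\emptyset$ for $1\le\ell\le c_1\log n$ is the same in both.

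The difference is how the short-return set is controlled. The paper never estimates it. It imports from \cite{Hollandetal} the statement that for $\mu$-a.e.\ $y$ one has $B_y\cap T^l B_y=\emptyset$ for all $l\le\kappa\log n$ once $n\ge n_0(y)$. It then takes $E_n$ to be the set of $y$ for which this holds at time $n$, reads off $\mu(E_n)\to1$ from this almost-sure eventual statement, and chooses $c_1\le\kappa$.

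You prove the needed input directly. On each of the at most $N^\ell$ branches of $T^\ell$, the map $w\mapsto T^\ell w-w$ is strictly monotone with derivative of modulus at least $\lambda^\ell-1\ge\lambda-1$ (at least $\lambda^\ell+1$ on orientation-reversing branches). So $\{w:|T^\ell w-w|<2\rho_n\}$ is a union of at most $N^\ell$ intervals of length $O(\rho_n)$. Its $\rho_n$-neighbourhood then has $\mu$-measure $O(N^\ell/n)$ by the density bound implied by \eqref{eq:Lebesgue}. Summing over $\ell\le c_1\log n$ gives $\mu(E_n^c)\le c\,n^{c_1\log N-1}$.

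Your route is self-contained. It gives an explicit admissible range $c_1<1/\log N$ and a polynomial rate, using only the finitely-branched expanding structure and bounded density (no mixing needed). The cited claim gives the stronger almost-sure eventual statement, which is more than this lemma requires.

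Two small remarks. The worry about small $\ell$ in your last paragraph is not a real obstacle, since the uniform lower bound $\lambda-1$ already settles it, as you note. To sidestep any measurability question, define $E_n$ as the complement of the finite union of intervals covering $F_n$ rather than as $F_n^c$.
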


\begin{proof}
In \cite{Hollandetal} the following claim is proven under the same assumptions as in Theorem~\ref{the:vts}.

\begin{claim}[\cite{Hollandetal}, p. 3968] There exists a number $\kappa>0$ such that for $\mu$-almost every $y$, there exists an $n_0(y)\in \mathbbm{N}$, such that
	  \begin{equation}
		\label{eq:noshortreturns}
		B_y \cap T^l B_y = \emptyset, \quad \text{for all } l \leq
		\kappa \log n,
	\end{equation}
	for all $n\geq n_0(y)$.
	\end{claim}
	Let $\kappa>0$ be given by the claim and let $E_n$ be the set of $y$ such that
	\eqref{eq:noshortreturns} holds. Then $\mu (E_n) \to 1$ as $n \to \infty$, by the claim. Suppose $k\in K_{m,n}\backslash \tilde{K}_{m,n}$ and choose $c_1\leq \kappa$. Then there exists a $j<m$ s.t. $k_{j+1}-k_j\leq \kappa\log n$. Suppose $y\in E_n$, then we know that 
	\[
	B_y\cap T^{k_{j+1}-k_j}B_y=\emptyset.
	\]
	Applying $T^{-k_{j+1}}$ and rewriting the equation as characteristic functions gives
	\[
	\mathbbm{1}_{B_y}(T^{k_j}x)\mathbbm{1}_{B_y}(T^{k_{j+1}}x)=0, \quad \text{for all }x,
	\]
	which clearly implies 
	\[
	\mathbbm{1}_{B_y}(T^{k_1}x)\dots \mathbbm{1}_{B_y}(T^{k_m}x)=0, \quad \text{for all }x.
	\]
\end{proof}
Set $\tilde{f}_n (x) := \int_{E_n} \mathbbm{1}_{A_n} (x,y) \,\mathrm{d} \mu(y)$ such that 
\[
	f_n (x) = \tilde{f}_n (x) +
	\int_{\complement E_n} \mathbbm{1}_{A_n} (x,y) \, \mathrm{d} \mu(y).
\]
It follows that
\[
|f_n-\tilde{f}_n| \leq 1 - \mu(E_n).
\]
and since $\mu(E_n)\to 1$ by Lemma~\ref{lem:shortreturn}, Theorem~\ref{the:vts} follows if $\lim_{n\to\infty}\tilde{f}_n(x)=e^{-t}$ for $\mu$-almost every $x$. The advantage of working with $\tilde{f}_n$ is that with 
\[
	\tilde{S}_{m,n} (x,y) := \sum_{k \in \tilde{K}_{m,n}}
	\mathbbm{1}_{B_y} (T^{k_1} (x)) \ldots \mathbbm{1}_{B_y}
	(T^{k_m} (x)),
\]
it follows from \eqref{eq:ExpProduct} and Lemma~\ref{lem:shortreturn} that
\[
	\tilde{f}_n(x)=\mu(E_n)+\sum_{m=1}^{n} (-1)^m \int_{E_n}
	\tilde{S}_{m,n} (x,y) \, \mathrm{d} \mu (y).
\] 
since if $y \in E_n$ then $S_{m,n} (x,y) = \tilde{S}_{m,n} (x,y)$. Not only does $\tilde{S}_{m,n} (x,y)$ contain less summands than $S_{m,n}$, but due to the gaps ensured by $\tilde{K}_{m,n}$ the products are amenable to applications of exponential mixing.

In order to estimate the limit of $\tilde{f}_n$ we will be working with two functions that bound $\tilde{f}_n$ from above and below and for which we will need to include even less summands. This is done as follows. Fix $\alpha \in (0,1)$ and let $[\cdot]$ denote the floor function. Let $M_n^\mathrm{odd}$ and
$M_n^\mathrm{even}$ be the two integers in the set
\[
  \{ [(\log n)^\alpha], [(\log n)^\alpha] + 1\}
\]
chosen such that
$M_n^\mathrm{odd}$ is odd and $M_n^\mathrm{even}$ is even.
We define $g_n^\mathrm{odd}$ and $g_n^\mathrm{even}$ by
\[
  g_{n}^\mathrm{odd} (x) = \mu(E_n) + \sum_{m=1}^{M_n^\mathrm{odd}} (-1)^m \int_{E_n} \tilde{S}_{m,n}
  (x,y) \, \mathrm{d} \mu (y),
\]
and similarly for $g_n^\mathrm{even}$. Since $n>M_n^\mathrm{odd},M_n^\mathrm{even}$, we have by the inclusion--exclusion principle, that
\[
  1 + \sum_{m=1}^{M_n^\mathrm{odd}} (-1)^m \tilde{S}_{m,n} (x,y)
  \leq
  1 + \sum_{m=1}^{n} (-1)^m \tilde{S}_{m,n} (x,y)
  \leq
  1 + \sum_{m=1}^{M_n^\mathrm{even}} (-1)^m \tilde{S}_{m,n} (x,y) .
\]
Integrating over $E_n$ we get that
\[
g_n^\mathrm{odd} \leq \tilde{f}_n \leq g_n^\mathrm{even},
\]
and it follows that if
$g_n^\mathrm{odd} (x), g_n^\mathrm{even} (x) \to e^{-t}$ then so does $\tilde{f}_n(x)$. The argument for this convergence is the same for
$g_n^\mathrm{odd}$ and $g_n^\mathrm{even}$ and we shall therefore
only write $M_n$ and $g_n$ from now on.

Next we will decompose $g_n$ in a useful way. Via the binomial formula and since $\mu(E_n)\to 1$ we see that , 
\[
\mu(E_n)+\mu(E_n)\sum_{m=1}^n\binom{n}{m}\frac{(-t)^m}{n^m}=\mu(E_n)\biggl(1-\frac{t}{n}\biggr)^n\to e^{-t}
\]
which motivates the definitions
\begin{align*}
  \Xi_n &:= \mu (E_n) \sum_{m=M_n+1}^{n} \binom{n}{m}
  \frac{(-t)^m}{n^{m}},\\
 \Psi_n &:= \mu (E_n)
\sum_{m=1}^{M_n} \biggl( \binom{n}{m} - \# \tilde{K}_{m,n}
\biggr) \frac{(-t)^m}{n^{m}}
\end{align*}
as well as
\begin{align*}
\Delta_{x,m} &:= \int_{E_n} \tilde{S}_{m,n}(x,y) \, \mathrm{d} \mu (y)  -
\#\tilde{K}_{m,n}\mu (E_n) \frac{t^m}{n^m}.\\
\lambda_n(x)&:=\sum_{m = 1}^{M_n}(-1)^m \Delta_{x,m}.
\end{align*}
Inserting these definitions it is easily verified that
\begin{align*}
  g_n (x)
  &= \mu (E_n) \biggl(1 - \frac{t}{n}
    \biggr)^{n} - \Xi_n - \Psi_n + \lambda_n(x).
\end{align*}
The following two results simplify the asymptotics of $g_n$.
\begin{lemma}
	\label{lem:binomial}
	For $m\leq M_n$  we have
	\[
	0 \leq 1 - \frac{\# \tilde{K}_{m,n}}{\binom{n}{m}} \leq c
	\frac{(\log n)^{1 + 2 \alpha}}{n}.
	\]
	for some $c>0$.
\end{lemma}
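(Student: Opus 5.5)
The lower bound $0 \le 1 - \#\tilde K_{m,n}/\binom{n}{m}$ is immediate, since $\tilde K_{m,n} \subset K_{m,n}$ and $\# K_{m,n} = \binom{n}{m}$. For the upper bound we must show that the proportion of $m$-subsets of $\{1,\dots,n\}$ having some gap $k_{j+1}-k_j \le c_1\log n$ is at most $c(\log n)^{1+2\alpha}/n$. We plan to do this with a union bound over the position $j$ of a short gap.

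For fixed $j \in \{1,\dots,m-1\}$ and fixed $d \in \{1,\dots,[c_1 \log n]\}$, we count the sequences $k\in K_{m,n}$ with $k_{j+1} = k_j + d$. Such a sequence is determined by the $(m-1)$-element sequence obtained by deleting $k_{j+1}$ (together with $j$ and $d$). Hence there are at most $\binom{n}{m-1}$ of them. Summing over $j$ and $d$ gives
\[
  \binom{n}{m} - \#\tilde K_{m,n} \le (m-1)\, c_1 \log n \binom{n}{m-1}.
\]

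Next we divide by $\binom{n}{m}$, using $\binom{n}{m-1}/\binom{n}{m} = m/(n-m+1)$. This gives
\[
  1 - \frac{\#\tilde K_{m,n}}{\binom nm} \le \frac{c_1 (m-1) m \log n}{n-m+1}.
\]
Since $m \le M_n \le (\log n)^\alpha + 1$, we have $m(m-1) \le c(\log n)^{2\alpha}$. Also, $n - m + 1 \ge n/2$ for $n$ large, because $M_n = o(n)$. Together these give the bound $c(\log n)^{1+2\alpha}/n$. For the finitely many small $n$, the left side is at most $1$, so enlarging $c$ covers them.

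The only point needing care is the injectivity in the counting step, namely that deleting $k_{j+1}$ loses no information once $j$ and $d$ are fixed. This holds because $k_{j+1} = k_j + d$ is recovered from the remaining sequence, where $k_j$ is its $j$-th entry. There is no real obstacle beyond that; the estimate is crude but sufficient.
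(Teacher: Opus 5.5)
Your proof is correct and follows essentially the same route as the paper. Both arguments bound the complement of $\tilde K_{m,n}$ by choosing $m-1$ entries freely ($\binom{n}{m-1}$ ways) and placing the remaining entry within $c_1\log n$ of one of them, then use $\binom{n}{m-1}/\binom{n}{m}=m/(n-m+1)$ together with $m\le c(\log n)^{\alpha}$. Your version is slightly tidier: fixing $(j,d)$ and deleting $k_{j+1}$ makes the injectivity explicit, and it gives the one-sided count $c_1\log n$ in place of the paper's $2c_1\log n$.
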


\begin{proof}
	Note that $\#K_{m,n}=\binom{n}{m}$ and hence
	\[
	1-\frac{\#\tilde{K}_{m,n}}{\binom{n}{m}}=\frac{\#\complement\tilde{K}_{m,n}}{\binom{n}{m}}.
	\]
	For a $k\in\complement\tilde{K}_{m,n}$, $m-1$ of its elements have no restrictions on their location while one $j\leq m-1$ must satisfy $k_{j+1}-k_{j}\leq c_1\log n$. We distribute the $m-1$ elements freely among $n$ locations which gives $\binom{n}{m-1}$ combinations. The last element can then at most be placed within a $c_1\log n$ neighborhood of each of the $m-1$ other elements. Using that $m\leq M_n\leq \tilde{c}(\log n)^{\alpha}$ for some $\tilde{c}>1$, this gives
	\begin{align*}
	\frac{\# \complement \tilde{K}_{m,n}}{\binom{n}{m}} \leq
	\frac{\binom{n}{m - 1}(m-1)(2 c_1 \log n)}{ \binom{n}{m}} &= (m-1) 2 c_1 \log n \frac{m}{n-(m-1)}\\
	&\leq c \frac{(\log n)^{1 + 2
			\alpha}}{n}, 
	\end{align*}
	since for some constant $\hat{c}\in (0,1)$ we have $n-\tilde{c}(\log n)^{\alpha}>\hat{c} n$.
\end{proof}

\begin{lemma}\label{lem:DeviationZero}
	  We have that $\Xi_n, \Psi_n \to 0$ as $n \to \infty$.
\end{lemma}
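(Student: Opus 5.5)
We treat $\Xi_n$ and $\Psi_n$ separately. In both cases we use $\mu(E_n)\le 1$, the elementary bound $\binom{n}{m}n^{-m}\le 1/m!$, and the fact that $M_n\to\infty$ like $(\log n)^\alpha$.

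\textbf{Bound for $\Xi_n$.} Term by term,
\[
|\Xi_n| \le \sum_{m=M_n+1}^{n} \binom{n}{m}\frac{t^m}{n^m} \le \sum_{m>M_n} \frac{t^m}{m!}.
\]
This is the tail of the convergent series for $e^{t}$. Since $M_n\ge [(\log n)^\alpha]\to\infty$, we get $\Xi_n\to 0$. No cancellation from the alternating signs is needed.

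\textbf{Bound for $\Psi_n$.} Here we use Lemma~\ref{lem:binomial}, which gives for $m\le M_n$
\[
0\le \binom{n}{m}-\#\tilde K_{m,n} \le c\,\frac{(\log n)^{1+2\alpha}}{n}\binom{n}{m}.
\]
Inserting this into the definition of $\Psi_n$ yields
\[
|\Psi_n| \le c\,\frac{(\log n)^{1+2\alpha}}{n}\sum_{m=1}^{M_n}\binom{n}{m}\frac{t^m}{n^m} \le c\,\frac{(\log n)^{1+2\alpha}}{n}\,e^{t}.
\]
The right-hand side tends to $0$ because any power of $\log n$ grows more slowly than $n$.

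The only point needing care is the constant in Lemma~\ref{lem:binomial}. It must be uniform in $m\le M_n$, and it is, since the proof of that lemma used only $m\le \tilde c(\log n)^\alpha$. Once this uniformity is noted, both estimates are routine, so the lemma presents no real obstacle.
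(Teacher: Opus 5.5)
Your proof is correct and follows the paper's argument: for $\Psi_n$ you insert Lemma~\ref{lem:binomial} exactly as the paper does. For $\Xi_n$, your termwise bound $\binom{n}{m}n^{-m}\le 1/m!$, which reduces $\Xi_n$ to the tail $\sum_{m>M_n}t^m/m!$, is a more explicit form of the paper's appeal to the Lagrange remainder of the Taylor expansion of $(1-t/n)^n$. Both arguments rest on the same estimate.
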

\begin{proof}
	For $\Xi_n \to 0$, expand $(1-\frac{t}{n})^n$ via the binomial formula as well as through the $M_n$'th order Taylor polynomial to see that $\Xi_n$ corresponds to the remainder term in Taylor's Theorem. With for instance a remainder term of Lagranges's type, a simple computation shows that the remainder vanishes.
	
	For $\Psi_n \to 0$, simply insert the estimate from Lemma~\ref{lem:binomial} in the definition of $\Psi_n$ to immediately get the convergence.
\end{proof}

By the decomposition of $g_n$ it now follows that
$g_n(x)\to e^{-t}$ if $\lambda_n(x)\to 0$. We will obtain this
convergence $\mu$-almost surely, along geometric sequences, by
considering the second moment of $\lambda_n$, i.e.

\begin{equation}
  \label{eq:Lambda}
  \Lambda_n := \int \lambda_n^2(x)\, \mathrm{d} \mu (x).
\end{equation}

This argument is part of the next proposition which also completes the proof of Theorem~\ref{the:vts} pending the confirmation of one estimate.

\begin{proposition}\label{prop:crit}
  If for some $s>0$ we have that for all $n$ sufficiently large the upper bound
  \begin{equation}
  	\label{eq:Lambda_n-Bound}
  	\Lambda_n \leq \frac{c}{n^s},
  \end{equation}
  holds, then for $\mu$-almost every $x\in X$ we have that $f_n(x)\to e^{-t}$ for all $t>0$.
\end{proposition}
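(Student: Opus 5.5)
The plan is a Chebyshev–Borel–Cantelli argument along a geometric subsequence, followed by an interpolation in $n$ that exploits monotonicity in the radius. Throughout I write $f_n(x;t)$ to record the dependence on $t$; the hypothesis \eqref{eq:Lambda_n-Bound} is used for each fixed $t$, with constants allowed to depend on $t$.

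\emph{Step 1: a subsequence for fixed $t$.} Fix $t>0$ and $\rho>1$, and set $n_j=[\rho^j]$. Chebyshev's inequality and \eqref{eq:Lambda_n-Bound} give, for any $\e>0$,
\[
  \mu\{\, x : \abs{\lambda_{n_j}(x)} > \e \,\} \le \e^{-2}\Lambda_{n_j} \le c\,\e^{-2} n_j^{-s} \le c\,\e^{-2}\rho^{-sj},
\]
which is summable in $j$. By Borel--Cantelli (with $\e=1/k$ and a countable intersection over $k$), $\lambda_{n_j}(x)\to 0$ for $\mu$-a.e.\ $x$.

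We have the identity $g_n = \mu(E_n)(1-t/n)^n - \Xi_n - \Psi_n + \lambda_n$, and Lemma~\ref{lem:DeviationZero} gives $\Xi_n,\Psi_n\to 0$. Applying this to both parities, $g^{\mathrm{odd}}_{n_j}(x),g^{\mathrm{even}}_{n_j}(x)\to e^{-t}$. The sandwich $g^{\mathrm{odd}}_n\le \tilde f_n\le g^{\mathrm{even}}_n$, together with $|f_n-\tilde f_n|\le 1-\mu(E_n)\to 0$, then yields $f_{n_j}(x;t)\to e^{-t}$ a.e. Intersecting over $t\in\mathbb{Q}_{>0}$ and over $\rho\in\{1+1/k\}$ still leaves a full-measure set $G$, since this is a countable intersection. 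Strictly, for each $t$ I need the convergence at the scaled values $t\rho^{\pm 1}$ used in Step 2 as well; these all lie in a countable set, so this is harmless.

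\emph{Step 2: interpolation in $n$.} Note that $f_n(x;t)=\mu\{\, y:\tau_{B(y,r_n(y,t))}(x)>n \,\}$, where the ball has measure $t/n$. Since $\mu(B(y,r))$ is increasing in $r$, a ball of larger measure contains one of smaller measure, and so has a smaller hitting time. For $n_j\le n\le n_{j+1}$ this gives two comparisons.
\begin{itemize}
\item \emph{Upper bound.} The ball of measure $t/n$ contains the ball of measure $t/n_{j+1}=s/n_j$, where $s=tn_j/n_{j+1}$. Since $n\ge n_j$, we get $f_n(x;t)\le f_{n_j}(x;s)$.
\item \emph{Lower bound.} The ball of measure $t/n$ is contained in the ball of measure $t/n_j=s'/n_{j+1}$, where $s'=tn_{j+1}/n_j$. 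Since $n\le n_{j+1}$, we get $f_n(x;t)\ge f_{n_{j+1}}(x;s')$.
\end{itemize}

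Because $n_{j+1}/n_j\to\rho$, for large $j$ we have $s\ge t/\rho^2$ and $s'\le t\rho^2$. Using monotonicity of $f$ in $t$ (a larger $t$ means a larger ball, so $f$ is smaller), we can replace $s,s'$ by the fixed values $t\rho^{-2}$ and $t\rho^2$. Taking these to be rational, or squeezing between rationals, Step 1 gives for $x\in G$
\[
  e^{-t\rho^{2}}\le\liminf_n f_n(x;t)\le\limsup_n f_n(x;t)\le e^{-t/\rho^{2}}.
\]
Letting $\rho=1+1/k\to 1$ gives the limit $e^{-t}$ for rational $t$. For arbitrary $t>0$, the same monotonicity in $t$ together with continuity of $e^{-t}$ extends the result to all $t>0$ on the single full-measure set $G$.

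\emph{Main obstacle.} The delicate point is the bookkeeping in Step 2. The radius $r_n(y,t)$ depends on both $n$ and $t$, so the comparison must be made at the level of ball measures, as done above. One must also check that the exceptional null set does not depend on $t$, which is why the argument first works with countably many $t$ and $\rho$ and then passes to all $t$ by monotonicity. The probabilistic part, Step 1, is routine once \eqref{eq:Lambda_n-Bound} is available.
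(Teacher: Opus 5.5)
Your proposal is correct and follows essentially the same route as the paper. Both arguments use Chebyshev (the paper applies Markov to $\lambda_n^2$) and Borel--Cantelli along a geometric subsequence, intersect over countably many rational $t$ and rational ratios, interpolate between consecutive subsequence terms by comparing ball measures, and pass to all $t>0$ by monotonicity in $t$. The only cosmetic difference is that you go through the intermediate values $s=tn_j/n_{j+1}$ and $s'=tn_{j+1}/n_j$ before invoking monotonicity in $t$, whereas the paper directly compares $f_n(x,t)$ with $f_{l_{k+1}}(x,a^2t)$ and $f_{l_k}(x,t/a^2)$.
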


\begin{proof}
Suppose $\Lambda_n\leq\frac{c}{n^s}$ for some $c>0,s>0$. Then 
\[
	\sum_{k=1}^{\infty} \Lambda_{l_k} =\sum_{k=1}^{\infty} \int \lambda_{l_k}^2 \, \mathrm{d} \mu<\infty,
\]
for all integer sequences of the form $l_k:=l_k(a):=\lceil a^k \rceil$ with $a>1$ where $\lceil\cdot\rceil$ denotes the roof function. The Markov inequality then gives that for any $\delta>0$
\[
	\mu\big \{x:\lambda_{l_k}^2(x)>\delta \big \}\leq \frac{1}{\delta} \int \lambda_{l_k}^2\,\mathrm{d}\mu.
\] 
This implies $\sum_{k=1}^{\infty}\mu\big \{x:\lambda_{l_k}^2(x)>\delta \big \}<\infty$ which means that almost surely $\lambda_{l_k}^2<\delta$ for all $k$ sufficiently large by the Borel--Cantelli Lemma. Since $\delta>0$ is arbitrary we get that almost surely $\lambda_{l_k}\to 0$ as $k\to\infty$ and hence also that almost surely $g_{l_k}\to e^{-t}$ as $k\to \infty$. By previous discussion this means that almost surely $\tilde{f}_{l_k}\to e^{-t}$ and in turn $f_{l_k}\to e^{-t}$. This almost sure convergence holds for a fixed $t>0$ and a given choice of $a$, hence the set of measure one for which it holds depends on $t$ and $a$. We denote these sets by $V_{t,a}$. For the rest of this proof we also write $f_n(x,t)$ for $f_n$ and $B_{y,n,t}$ for $B_y$ to emphasize dependecies. Since
\[
\tilde{V}:=\bigcap_{t\in\mathbbm{Q}_{>0}}\bigcap_{a\in\mathbbm{Q}_{>1}} V_{t,a}
\]
is a countable intersection we immediately get the convergence almost surely for all rational $t>0$ and all rational $a>1$.

Fix $x \in \tilde{V}$ and let $a\in\mathbbm{Q}_{>1}$. For any $n$, there is a $k$ such that
$l_{k}(a) < n \leq l_{k+1}(a)$. If $n$ (and so $k$) is large enough, we
have that
\[
  \frac{l_{k+1}(a)}{a^2} \leq n \leq a^2 l_k (a),
\]
which implies that
\[
	\mu(B_{y,l_{k+1},a^2t})\geq \mu(B_{y,n,t})\geq \mu(B_{y,l_k,t/a^2}) 
\]
Recall that by definition,
\[
	f_n(x,t)=\mu\left\{y:\tau_{B_{y,n,t}}(x)>t/\mu(B_{y,n,t})\right\}.
\]
When comparing this to 
\[
	f_{l_{k+1}}(x,a^2 t)=\mu\left\{ y: \tau_{B_{y,l_{k+1},a^2t}}(x)>a^2 t/\mu\left(B_{y,l_{k+1},a^2 t}\right)\right\},
\]
we are considering the first return to a larger set (i.e.\ $\tau_{B_{y,n,t}}(x) \geq \tau_{B_{y,l_{k+1},a^2t}}(x)$) on the left hand side of the inequality while the right hand side equals $l_{k+1}$, (i.e.\ $t/\mu(B_{y,n,t}) =n \leq l_{k+1} = a^2 t/\mu\left(B_{y,l_{k+1},a^2 t}\right)$). This implies
\[
  f_{l_{k+1}} (x,  a^2t) \leq f_{n} (x,t) \leq f_{l_k} (x, t/a^2),
\]
where the second inequality follows by an identical argument.
Hence $e^{-a^2 t} \leq \liminf f_{n} (x,t) \leq \limsup f_{n}
(x,t) \leq e^{-t/a^2}$. Since $a\in\mathbbm{Q}_{>1}$ is arbitrary, we conclude that
$\lim f_{n} (x,t) = e^{-t}$ for all $x\in \tilde{V}$ and all $t\in \mathbbm{Q}_{>0}$. Finally, since $f_{n} (x,t)$ converges almost surely to $e^{-t}$ for all positive rationals, and $t \mapsto f_{n} (x,t)$ is decreasing, it follows that $f_{n} (x,t)$ converges to $e^{-t}$ almost surely for all $t >0$.
\end{proof}

We now proceed to prove the bound in \eqref{eq:Lambda_n-Bound} which will conclude the proof of Theorem~\ref{the:vts} and which contains the majority of technical work. 

We let
\begin{align*}
  J_k&=J_k (x)
   := \int_{E_n}  \mathbbm{1}_{B_y} (T^{k_1} (x)) \ldots
    \mathbbm{1}_{B_y} (T^{k_m} (x)) \, \mathrm{d} \mu (y).
\end{align*}
Using \eqref{eq:Lambda}, it is easily verified that
\begin{align*}
	\Lambda_n
	& = \sum_{m_1,m_2=1}^{M_n} (-1)^{m_1+m_2} \int
	\Delta_{x,m_1} \Delta_{x,m_2} \, \mathrm{d} \mu (x) \\
	& = \sum_{m_1,m_2=1}^{M_n} (-1)^{m_1+m_2} \sum_{\substack{k \in \tilde{K}_{m_1,n} \\j \in
			\tilde{K}_{m_2,n} }} \biggl(
	\mu(E_n)^2 \biggl( \frac{t}{n} \biggr)^{m_1+m_2} - \mu(E_n)
	\biggl( \frac{t}{n} \biggr)^{m_2} \int J_k \,
	\mathrm{d} \mu \\
	& \hspace{2cm} - \mu(E_n)
	\biggl( \frac{t}{n} \biggr)^{m_1} \int J_j \,
	\mathrm{d} \mu + \int J_k J_j \,
	\mathrm{d} \mu \biggr).\\
\end{align*}
We will eventually estimate
$\Lambda_n\leq \lvert \Lambda_n\rvert$ and thereby get rid of the
alternating factor $(-1)^{m_1+m_2}$. We then get
\begin{align}
	\label{eq:numlambda}
	&\Lambda_n	 \leq  \sum_{m_1,m_2=1}^{M_n} \bigg\vert\#\tilde{K}_{m_1,n} \#\tilde{K}_{m_2,n}
	\mu(E_n)^2 \biggl( \frac{t}{n} \biggr)^{m_1+m_2}+ \sum_{\substack{k \in \tilde{K}_{m_1,n} \\j \in \tilde{K}_{m_2,n} }}\int J_k J_j \,
	\mathrm{d} \mu\nonumber\\
	& \hspace{3cm} - \mu(E_n) \#\tilde{K}_{m_2,n}\biggl( \frac{t}{n} \biggr)^{m_2}  \sum_{k\in \tilde{K}_{m_1,n}} \int J_k \,
	\mathrm{d} \mu \\
	& \hspace{3cm}- \mu(E_n) \#\tilde{K}_{m_1,n}\biggl(\frac{t}{n} \biggr)^{m_1} \sum_{j\in \tilde{K}_{m_2,n}} \int J_j \,
	\mathrm{d} \mu \bigg \vert.\nonumber
\end{align}
We see that we need upper and lower bounds on sums of the type
\[
	\sum_{k\in \tilde{K}_{m,n}} \int J_k \,
	\mathrm{d} \mu\quad\text{ and } \quad \sum_{\substack{k \in \tilde{K}_{m_1,n} \\j \in \tilde{K}_{m_2,n} }}\int J_k J_j \,
	\mathrm{d} \mu. 
\]
We begin with the first type which is the easier case. 
If $k$ is such that $k_{j+1} - k_j > c_2 \log n$ for some $c_2>0$ for all $j$,
then by repeated application of exponential mixing in which the $BV$-norm is applied to $\mathbbm{1}_{B_y}$ and the $L^1$-norm is applied to the remaining product, we obtain
\begin{equation}
	\label{eq:Ikbounds}
  \mu (E_n) \biggl(\frac{t}{n} - \frac{c}{n^{\tau c_2}} \biggr)^m
  \leq \int J_k \,
  \mathrm{d} \mu \leq \mu (E_n) \biggl(\frac{t}{n} + \frac{c}{n^{\tau
      c_2}} \biggr)^m.
\end{equation}
Hence, if $c_2 > \tau^{-1}$, then
\[
  \int J_k \,
  \mathrm{d} \mu \sim \biggl(\frac{t}{n}\biggr)^m \mu (E_n).
\]
For later estimates, it will actually be helpful to require
$c_2 > 4\tau^{-1}$. Also, the constant $c_1>0$ determined by
Lemma~\ref{lem:shortreturn} may be chosen arbitrarily small and
it will be helpful to require $\tau^{-1}>c_1$. So we fix
$c_2 > 4\tau^{-1} > \tau^{-1}> c_1$. For $1 \leq p \leq m-1$, let
$\hat{K}_{m,n,p}$ be those $k \in \tilde{K}_{m,n}$ such that
$k_{j+1} - k_j > c_2 \log n$ for all $j$ except for exactly $p$
times, when we have $c_1 \log n < k_{j+1} - k_j \leq c_2 \log n$.

\begin{proposition}
  \label{prop:Ik-sum}
  For $m\leq M_n$ we have for all sufficiently large $n$ that
  \begin{equation}
    \label{eq:Ik-sum}
    \biggl( 1 - \frac{c}{n^{1 - \varepsilon}} \biggr)
    \leq
    \frac{\sum_{k \in \tilde{K}_{m,n}} \int J_k \,
    	\mathrm{d} \mu}{\# \tilde{K}_{m,n}
      \mu(E_n) \bigl( \frac{c}{n} \bigr)^m}
    \leq  \biggl( 1 + \frac{c}{n^{\tau c_1
        - \varepsilon}} \biggr).
  \end{equation}
\end{proposition}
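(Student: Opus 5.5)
We split $\tilde{K}_{m,n}$ by the number of gaps that are only moderately large. Let $\hat{K}_{m,n,0}$ be the set of $k\in\tilde{K}_{m,n}$ with all gaps $k_{j+1}-k_j>c_2\log n$. Then
\[
\tilde{K}_{m,n}=\hat{K}_{m,n,0}\cup\bigcup_{p=1}^{m-1}\hat{K}_{m,n,p}
\]
is a disjoint union. (The $c$ in the denominator of \eqref{eq:Ik-sum} we read as $t$, consistent with \eqref{eq:Ikbounds}.)

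\textbf{Step 1: the well-separated part.} For $k\in\hat{K}_{m,n,0}$, \eqref{eq:Ikbounds} applies directly. Since $m\le M_n\le c(\log n)^\alpha$ and $\tau c_2>4$,
\[
\Bigl(1\pm \tfrac{c}{t}n^{1-\tau c_2}\Bigr)^m=1+O\bigl((\log n)^\alpha n^{-3}\bigr).
\]
So each such $\int J_k\,\mathrm{d}\mu$ equals $\mu(E_n)(t/n)^m(1+O(n^{-3+\varepsilon}))$.

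\textbf{Step 2: counting.} By the same counting as in Lemma~\ref{lem:binomial}, with $c_2$ in place of $c_1$, we get
\[
\frac{\#\hat{K}_{m,n,0}}{\#\tilde{K}_{m,n}}\ge 1-c\frac{(\log n)^{1+2\alpha}}{n}\ge 1-\frac{c}{n^{1-\varepsilon}}.
\]
More generally, choosing which $p$ gaps are short and where to place them gives
\[
\#\hat{K}_{m,n,p}\le \binom{m}{p}\,(c_2\log n)^p\binom{n}{m-p}\le \#\tilde{K}_{m,n}\,\Bigl(\frac{c\,m\log n}{n}\Bigr)^p\cdot m^p .
\]
This holds since $\binom{n}{m-p}/\binom{n}{m}\le (m/(n-m))^p$ and $\#\tilde{K}_{m,n}\sim\binom nm$. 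For the lower bound we then drop all $p\ge1$ terms, since $J_k\ge0$, and combine Steps 1 and 2. This gives the left inequality $1-c/n^{1-\varepsilon}$.

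\textbf{Step 3: the upper bound, which is the main obstacle.} For $k\in\hat{K}_{m,n,p}$ with $p\ge1$ we need an upper bound on $\int J_k\,\mathrm{d}\mu$ that is better than the trivial one. We apply exponential mixing only across the gaps exceeding $c_1\log n$, which is all gaps in $\tilde{K}_{m,n}$. Across a gap larger than $c_1\log n$ the error term is $c\,n^{-\tau c_1}$ rather than $n^{-\tau c_2}$, so
\[
\int J_k\,\mathrm{d}\mu\le\mu(E_n)\Bigl(\frac tn+\frac{c}{n^{\tau c_1}}\Bigr)^{p}\Bigl(\frac tn+\frac c{n^{\tau c_2}}\Bigr)^{m-p}.
\]
Because $\tau c_1<1$, the first factor is of order $n^{-\tau c_1 p}$. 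Compared with $(t/n)^m$, this loses a factor $(n^{1-\tau c_1})^p$. Multiplying by the count from Step 2, the $p$-th term relative to the main term is at most
\[
\bigl(c\,m^2\log n\; n^{-1}\,n^{1-\tau c_1}\bigr)^p=\bigl(c(\log n)^{1+2\alpha}n^{-\tau c_1}\bigr)^p\le \bigl(c\,n^{-\tau c_1+\varepsilon}\bigr)^p .
\]
Summing the geometric series over $p\ge1$ gives $c\,n^{-\tau c_1+\varepsilon}$. Adding the contribution $1+O(n^{-3+\varepsilon})$ from $\hat{K}_{m,n,0}$, and using $\#\hat{K}_{m,n,0}\le\#\tilde{K}_{m,n}$, yields the right inequality.

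The delicate point is justifying the mixing bound for the $p$ short gaps. When applying $BV$-against-$L^1$ mixing iteratively from the last index, we must use $\|\mathbbm 1_{B_y}\|_{BV}\le 3$ uniformly in $y$ and integrate over $y\in E_n$ before or after mixing via Fubini. We must also check that the accumulated constants $c^m$ are harmless, which holds since $m\le c(\log n)^\alpha$ gives $c^m=n^{o(1)}$, absorbed into $\varepsilon$.
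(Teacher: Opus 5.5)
Your proposal is correct and is essentially the paper's proof. It uses the same decomposition $\tilde K_{m,n}=\bigcup_{p=0}^{m-1}\hat K_{m,n,p}$, the same mixing bound with error $c\,n^{-\tau c_1}$ across the $p$ intermediate gaps, a counting estimate equivalent to Lemma~\ref{lem:firstcounting}, a geometric series in $c(\log n)^{1+2\alpha}n^{-\tau c_1}$ for the upper bound, and only the $p=0$ terms with \eqref{eq:Ikbounds} for the lower bound.

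One small point concerns the order of mixing. Reading the $c$ in the denominator as $t$ is exactly what the later cancellation in $\Lambda_n$ requires. The mixing must be peeled off from the earliest index, so that the single indicator at time $k_1$ carries the $BV$ norm and the later product carries the $L^1$ norm. That matches your use of $\|\mathbbm{1}_{B_y}\|_{BV}\le 3$, but not the phrase ``from the last index'' if taken literally.
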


To prove Proposition~\ref{prop:Ik-sum}, we will need the
following lemma.

\begin{lemma}
  \label{lem:firstcounting}
  For $m\leq M_n$ we have
  \[
    \frac{\# \hat{K}_{m,n,p}}{\# \tilde{K}_{m,n}}  \leq c \frac{(\log
      n)^{p (1 + 2\alpha)}}{n^p}.
  \]
\end{lemma}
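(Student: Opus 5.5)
We bound $\# \hat{K}_{m,n,p}$ by a direct counting argument, in the spirit of Lemma~\ref{lem:binomial}, and compare it with $\binom{n}{m}$. Lemma~\ref{lem:binomial} gives $\# \tilde{K}_{m,n} \geq \tfrac12 \binom{n}{m}$ for $n$ large. Hence it suffices to show
\[
\# \hat{K}_{m,n,p} \leq c\,\binom{n}{m}\frac{(\log n)^{p(1+2\alpha)}}{n^p}.
\]

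\textbf{Injection.} To an element $k=(k_1,\dots,k_m)\in\hat{K}_{m,n,p}$ we associate two pieces of data. Let $j_1<\dots<j_p$ be the indices with $k_{j_i+1}-k_{j_i}\leq c_2\log n$.
\begin{itemize}
\item The $(m-p)$-subsequence obtained by deleting $k_{j_1+1},\dots,k_{j_p+1}$. This is an element of $K_{m-p,n}$.
\item The gaps $d_i=k_{j_i+1}-k_{j_i}\in(c_1\log n,c_2\log n]$.
\end{itemize}

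Conversely, $k$ is reconstructed by choosing the following, with the positions of the deleted elements determined by the gaps:
\begin{itemize}
\item an element of $K_{m-p,n}$, which gives at most $\binom{n}{m-p}$ choices;
\item the $p$ elements of the subsequence after which the short gaps are inserted, which gives at most $\binom{m}{p}\le m^p$ choices;
\item the $p$ gap lengths, which gives at most $(c_2\log n)^p$ choices.
\end{itemize}
One needs to check that this assignment is injective. It is, because the gaps between the retained elements are otherwise all large: knowing which retained element precedes each inserted one, together with its distance, recovers $k$ uniquely. Therefore
\[
\# \hat{K}_{m,n,p}\le \binom{n}{m-p}\, m^p\,(c_2\log n)^p.
\]

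\textbf{Binomial ratio.} We have
\[
\frac{\binom{n}{m-p}}{\binom{n}{m}}=\prod_{i=0}^{p-1}\frac{m-i}{n-m+p-i}\le\Bigl(\frac{m}{n-m}\Bigr)^p.
\]
Since $m\leq M_n\leq \tilde c(\log n)^\alpha$, we have $n-m\geq \hat c n$ as in the proof of Lemma~\ref{lem:binomial}. Combining the bounds gives
\[
\frac{\# \hat{K}_{m,n,p}}{\binom{n}{m}}\le \Bigl(\frac{c\,m^2\log n}{n}\Bigr)^p\le \Bigl(\frac{c(\log n)^{1+2\alpha}}{n}\Bigr)^p.
\]
Dividing by $\#\tilde K_{m,n}\geq\tfrac12\binom nm$ then yields the claim.

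\textbf{Constants.} The only delicate point is that the constant must be uniform in $p$. As written, the bound carries a factor $c^p$, not a single $c$. This is harmless for how the lemma is used, where these terms are summed against a factor $n^{-p}$. If a $p$-independent constant is genuinely required, one absorbs $c^p$ by noting that $c^p\le (\log n)^{\epsilon p}$ for large $n$, or one simply states the lemma with $c^p$. I expect this bookkeeping of constants, together with verifying injectivity of the reconstruction map, to be the only real obstacles; everything else is elementary.
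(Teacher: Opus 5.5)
Your proof is correct and follows essentially the paper's argument. The paper also bounds $\#\hat K_{m,n,p}\le\binom{n}{m-p}(m\,c_2\log n)^p$ by placing the $p$ short-gap elements within $c_2\log n$ of the others, uses $\binom{n}{m-p}/\binom{n}{m}\le\bigl(m/(n-m)\bigr)^p$, and invokes Lemma~\ref{lem:binomial} to compare $\#\tilde K_{m,n}$ with $\binom{n}{m}$. Your encoding is more careful than the paper's, though to handle consecutive short gaps cleanly you should record the index set $\{j_1,\dots,j_p\}\subset\{1,\dots,m-1\}$ rather than elements of the retained subsequence.

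On constants: like the paper, you effectively get $c^p$ rather than $c$, which is how the lemma is used later. Absorbing $c^p\le(\log n)^{\epsilon p}$ would change the exponent of $\log n$. Keeping $\binom{m}{p}\le m^p/p!$ instead of $m^p$ gives a genuinely $p$-independent constant, since $C^p/p!\le e^{C}$.
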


\begin{proof}
  We have
  \begin{align*}
  		\frac{\# \hat{K}_{m,n,p}}{\#\tilde{K}_{m,n}}&=\frac{\# \hat{K}_{m,n,p}}{\binom{n}{m}}\frac{\binom{n}{m}}{\#\tilde{K}_{m,n}}\\
  		&\leq\frac{\# \hat{K}_{m,n,p}}{\binom{n}{m}}\frac{1}{1-c\frac{(\log n)^{1+2\alpha}}{n}}\\
  		&\leq \frac{\# \hat{K}_{m,n,p}}{\binom{n}{m}}
  	\end{align*}
  by Lemma~\ref{lem:binomial}. To obtain an upper bound on $\# \hat{K}_{m,n,p}$ we argue as follows. Sequences in $\# \hat{K}_{m,n,p}$ have $m-p$ elements that give rise to $m-p-1$ gaps larger than $c_2\log n$. The trivial upper bound on the number of arrangements of the $m-p$ elements is $\binom{n}{m-p}$, i.e.\ simply ignoring the lower bound on the gaps. The remaining $p$ gaps are between $c_1\log n$ and $c_2\log n$. Again, by ignoring the lower bound of $c_1 \log n$ we ensure that we overcount. We imagine that $m-p$ elements have been distributed betwen $1$ and $n$. The first of the remaining $p$ elements can then at most be placed in a $c_2\log n$ neighborhood of each of the already distributed $m-p$ elements. For the following element there will be $m-p+1$ such neighborhoods, and for each further elements there will be an additional neighborhood in which it can be placed. However, to simplify the upper bound we will estimate that each of the $p$ elements can be placed in $m$ such neighborhoods. This gives the upper bound
  \begin{align*}
  		\frac{\# \hat{K}_{m,n,p}}{\binom{n}{m}}&\leq \frac{\binom{n}{m-p}(mc_2\log n)^p}{\binom{n}{m}}\\
  		&=(mc_2\log n)^p\frac{m(m-1)\dots (m-(p-1))}{(n-(m-p))\dots (n-(m-1))}\\
  		&\leq (mc_2\log n)^p\frac{m^p}{(n-m)^p}\\
  		&\leq ((\log n)^{\alpha} c_2\log n)^p\frac{(\log n)^{\alpha p}}{(n-(\log n)^{\alpha})^p}\\
  		&\leq c\frac{(\log n)^{p(1+2\alpha)}}{n^p}
  \end{align*}  
  since $n-(\log n)^{\alpha}\geq \tilde{c}n$ for some $\tilde{c}\in (0,1)$.
\end{proof}
Note that if $k \in \hat{K}_{m,n,p}$ then
\[
0 \leq \int J_k \,
\mathrm{d} \mu \leq \mu(E_n)
\biggl( \frac{t}{n} + \frac{c}{n^{\tau c_2}} \biggr)^{m-p}
\biggl(\frac{t}{n} +  \frac{c}{n^{\tau c_1}} \biggr)^p.
\]
The lower bound on $\int J_k \,
\mathrm{d} \mu$ is trivial while the upper bound follows from repeated application of exponential mixing in the same way as before.
\begin{proof}[Proof of Proposition~\ref{prop:Ik-sum}]
  We start with the upper bound on the sum of
  $\int J_k \, \mathrm{d} \mu$. Note that
  $\tilde{K}_{m,n}=\cup_{p=0}^{m-1} \hat{K}_{m,n,p}$ with the
  union being disjoint. We have by Lemma~\ref{lem:firstcounting}
  that
  \begin{align*}
    \sum_{k \in \tilde{K}_{m,n}} \int & J_k \,
    \mathrm{d} \mu
      \leq \sum_{p=0}^{m-1} \sum_{k \in \hat{K}_{m,n,p}} \mu (E_n) \biggl(
      \frac{t}{n} + \frac{c}{n^{\tau c_2}} \biggr)^{m-p}  \biggl(\frac{t}{n} + \frac{c}{n^{\tau c_1}} \biggr)^p \\
      &=\# \hat{K}_{m,n,0} \mu (E_n) \biggl(\frac{t}{n} + \frac{c}{n^{\tau c_2}} \biggr)^{m}\\
      & \phantom{.} \qquad + \sum_{p=1}^{m-1} \#\hat{K}_{m,n,p}\mu(E_n)\biggl(
      \frac{t}{n} + \frac{c}{n^{\tau c_2}} \biggr)^{m-p}\biggl(\frac{t}{n} + \frac{c}{n^{\tau c_1}} \biggr)^p\\
      &\leq \#\tilde{K}_{m,n} \mu (E_n) \biggl(\frac{t}{n} + \frac{c}{n^{\tau c_2}} \biggr)^{m} \biggl( 1+\sum_{p=1}^{m-1} c^p\frac{(\log n)^{p(1+2\alpha)}}{n^p} \Big( 1+\frac{nc}{tn^{\tau c_1}} \Big)^p \biggr).
  \end{align*}
  The sum over $p$ may be rewritten as
  \begin{equation*}
  		\sum_{p=1}^{m-1} \biggl (c\frac{(\log n)^{(1+2\alpha)}}{n}+\frac{c}{t}\frac{(\log n)^{(1+2\alpha)}}{n^{\tau c_1}} \biggr)^p\leq \sum_{p=1}^{\infty} \biggl (c\frac{(\log n)^{(1+2\alpha)}}{n^{\tau c_1}} \biggr)^p,
  \end{equation*} 
  since $\tau c_1<1$. For $n$ sufficiently large, this is a convergent
  geometric series, and we can bound it by its first summand to get
  \begin{align}
    \nonumber
    \sum_{k \in \tilde{K}_{m,n}} \int J_k \,
    \mathrm{d} \mu \leq \# \tilde{K}_{m,n} \mu
    (E_n) \biggl( \frac{t}{n} + \frac{c}{n^{\tau c_2}} \biggr)^m
    \biggl( 1 + c \frac{(\log n)^{1 + 2 \alpha}}{n^{\tau c_1}}
    \biggr) \\
    \leq \# \tilde{K}_{m,n} \mu(E_n)
    \biggl( \frac{c}{n} \biggr)^m \biggl( 1 + \frac{c}{n^{\tau c_1 - \varepsilon}} \biggr).
    \label{eq:expectationestimate}
  \end{align}
  To get a lower bound, we first note that it follows from $\tilde{K}_{m,n}=\cup_{p=0}^{m-1} \hat{K}_{m,n,p}$ and Lemma~\ref{lem:firstcounting} that for all $n$ sufficiently large,
  \[
    \frac{\# \hat{K}_{m,n,0}}{\# \tilde{K}_{m,n}} = 1 -
    \sum_{p = 1}^{m-1} \frac{\# \hat{K}_{m,n,p}}{\#
      \tilde{K}_{m,n}} \geq 1-  \sum_{p = 1}^{\infty} \bigg ( \frac{c(\log n)^{1+2\alpha}}{n} \bigg)^p \geq 1 - \frac{c}{n^{1-\varepsilon}}.
  \]
  Using \eqref{eq:Ikbounds} we then get that
  \begin{align*}
    \sum_{k \in \tilde{K}_{m,n}} \int J_k \,
    \mathrm{d} \mu & =\sum_{p=0}^{m-1}\sum_{k\in\hat{K}_{m,n,p}} \int J_k \,
                     \mathrm{d} \mu \geq \sum_{k\in\hat{K}_{m,n,0}} \int J_k \,
                     \mathrm{d} \mu\\
                   & \geq \# \hat{K}_{m,n,0} \mu
                     (E_n) \biggl( \frac{t}{n} - \frac{c}{n^{\tau c_2}}
                     \biggr)^m \\
                   &  \geq \# \tilde{K}_{m,n} \mu (E_n) \biggl( \frac{t}{n} -
                     \frac{c}{n^{\tau c_2}} \biggr)^m \biggl(1 -
                     \frac{c}{n^{1-\varepsilon}} \biggr)\\
                   & \geq \# \tilde{K}_{m,n} \mu (E_n) \biggl( \frac{c}{n}
                     \biggr)^m \biggl(1 -  \frac{c}{n^{1-\varepsilon}} \biggr)
  \end{align*}

  In conclusion, we have proved \eqref{eq:Ik-sum}.
\end{proof}

We move on to consider the second sum, i.e.\
$\sum\int J_k J_j \, \mathrm{d}\mu$. To obtain upper and lower
bounds, we follow a similar approach as in the case of the first
sum. The counting arguments are about the same, but the arguments
around are more complicated. The goal is to obtain the following
proposition.

\begin{proposition}
  \label{prop:JkJj-sum}
  We have for $n$ sufficiently large,
  \[
    1 - \frac{c}{n^{1-\varepsilon}} \leq \frac{\displaystyle
      \sum_{\substack{k \in \tilde{K}_{m_1,n} \\j \in
          \tilde{K}_{m_2,n} }} \int J_k J_j \, \mathrm{d} \mu}{
      \# \tilde{K}_{m_1,n} \# \tilde{K}_{m_2,n} \mu (E_n)^2
      \bigl(\frac{c}{n} \bigr)^{m_1 + m_2}} \leq 1 +
    \frac{c}{n^{\tau c_0 - \varepsilon}}.
  \]
\end{proposition}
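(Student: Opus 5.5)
The plan is to write the double sum as an integral over triples $(x,y,z)$ and to organise the pairs $(k,j)$ by how the two index sequences interleave. By Fubini,
\[
\int J_k J_j \, \mathrm{d}\mu = \int_{E_n}\int_{E_n} \mu\Bigl( \bigcap_{i=1}^{m_1} T^{-k_i}B_y \cap \bigcap_{l=1}^{m_2} T^{-j_l} B_z \Bigr) \, \mathrm{d}\mu(y)\,\mathrm{d}\mu(z).
\]
For fixed $(k,j)$, merge the two sequences into one increasing list of times, with coinciding times allowed. Gaps between two $k$'s (or between two $j$'s) already exceed $c_1\log n$. Between a $k$-time and a $j$-time, however, the gap can be anything, including $0$.

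Fix a constant $c_0$, large compared with $\tau^{-1}$. We call a $k$-time and a $j$-time a \emph{close pair} if they differ by at most $c_0\log n$. Let $q$ be the number of close pairs, and group the merged list into blocks: a close pair forms one block, and every other time is a singleton.

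For fixed $y,z$, each block is a function of one of two forms:
\begin{itemize}
\item $\mathbbm{1}_{B_y}$ or $\mathbbm{1}_{B_z}$ for a singleton;
\item $\mathbbm{1}_{B_y}\cdot \mathbbm{1}_{B_z}\circ T^{d}$ for a close pair, with $0\le d\le c_0\log n$.
\end{itemize}
Its $L^1$-norm is $t/n$ for a singleton and $\mu(B_y\cap T^{-d}B_z)$ for a close pair. For the BV-norm of a close-pair block, I will use that $T^{-d}B_z$ is a union of at most $e^{O(d)}$ intervals, which is polynomial in $n$. Alternatively, I will place the BV factor on the plain indicator side so that only $\lVert\mathbbm{1}_{B_y}\rVert_{BV}\le c$ is needed, exactly as was done for \eqref{eq:Ikbounds}.

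Applying exponential mixing repeatedly along the merged list of blocks, as in \eqref{eq:Ikbounds}, gives the following. When $q=0$ and all gaps in the merged list exceed $c_0\log n$, we get $\mu(E_n)^2 (t/n \pm c n^{-\tau c_0})^{m_1+m_2}$. In general, the intersection measure is at most
\[
\Bigl(\frac tn + \frac{c}{n^{\tau c_1}}\Bigr)^{m_1+m_2-2q}\prod_{i=1}^q \mu(B_y\cap T^{-d_i}B_z),
\]
up to $(1+o(1))$ factors. There is one subtlety here: gaps between a $k$-time and a $j$-time that are not close exceed $c_0\log n$, while within-sequence gaps are only $> c_1\log n$. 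This is exactly the situation of Proposition~\ref{prop:Ik-sum}, where it was handled through the sets $\hat K_{m,n,p}$, and I will reuse that decomposition.

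The key integral estimate for close pairs is $\int\int \mu(B_y\cap T^{-d}B_z)\,\mathrm{d}\mu(y)\,\mathrm{d}\mu(z)\le (c/n)^2$. To see this, note that for every point $w$ we have $\int \mathbbm{1}_{B_z}(w)\,\mathrm{d}\mu(z)=\mu\{z: |z-w|<r_n(z,t)\}\le c/n$ by \eqref{eq:Lebesgue}, since $r_n(z,t)\asymp t/n$ uniformly. Integrating first in $z$ and then in $x$ and $y$ gives the bound. When $q\ge 2$, the variables $y$ and $z$ are correlated through all $q$ factors, so I will keep one factor and bound the other $q-1$ factors by $\mu(B_y)=t/n$. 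This yields at most $(c/n)^{q+1}$ for the $2q$ paired indices, compared with $(t/n)^{2q}$ in the main term, a loss of a factor $n^{q-1}$.

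The counting step follows Lemma~\ref{lem:firstcounting}. The number of pairs $(k,j)$ with $q$ close pairs is at most
\[
\#\tilde K_{m_1,n}\,\#\tilde K_{m_2,n}\Bigl(\frac{c\,(\log n)^{1+2\alpha}}{n}\Bigr)^q,
\]
because each paired $j$-element must lie in a $c_0\log n$-neighbourhood of one of at most $M_n$ elements of $k$. The $q\ge1$ contributions, relative to the main term, are therefore bounded by $\sum_{q\ge1} n^{q-1}(c(\log n)^{1+2\alpha}/n)^q \le c(\log n)^{c}/n\le c/n^{1-\varepsilon}$. For the same reason, the $(1+cn^{-\tau c_1}\cdot n/t)^p$ losses from the $\hat K$-type within-sequence gaps are absorbed as in Proposition~\ref{prop:Ik-sum}.

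For the upper bound, I sum the $q=0$ main term, whose relative error is $n^{-\tau c_0+\varepsilon}$, together with these corrections. For the lower bound, I drop all pairs with $q\ge1$ or with short within-sequence gaps; their proportion is at most $c/n^{1-\varepsilon}$. On the remaining pairs I use the lower mixing bound.

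The main obstacle is the close-pair blocks. Mixing gives nothing there, and the individual measure $\mu(B_y\cap T^{-d}B_z)$ can be as large as $t/n$ for particular $(y,z)$, for instance when $d=0$ and $y=z$. The argument therefore depends on averaging over $z$ \emph{before} taking products. It also requires controlling the BV-norm of the merged blocks, or arranging the mixing so that only plain ball indicators carry the BV-norm, and doing so uniformly in $d\le c_0\log n$.
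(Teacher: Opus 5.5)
Your lower bound, your counting, and your averaged estimate for close pairs are sound. The latter means integrating over $z$ first to get $\iint \mu(B_y\cap T^{-d}B_z)\,\mathrm{d}\mu(y)\,\mathrm{d}\mu(z)\le (c/n)^2$, and keeping only one such factor when there are several; this is essentially the paper's bound on $Q$. The gap is the mixing step across a close-pair block. You identify it as the main obstacle but do not resolve it, and choosing $c_0$ large compared with $\tau^{-1}$ makes it unresolvable under the stated hypotheses. There are three problems.

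\textbf{(i)} Since $c_1<\tau^{-1}<c_0$ and within-sequence gaps are only guaranteed to exceed $c_1\log n$, a $j$-time can lie within $c_0\log n$ of two consecutive $k$-times. So close pairs are not disjoint blocks: clusters can be chains $k_1<j_1<k_2<j_2<\cdots$ of length up to $\bar m$.

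\textbf{(ii)} The mixing inequality puts the BV norm on the \emph{earlier} function. Decoupling a block $G=\mathbbm{1}_{B_y}\cdot\mathbbm{1}_{B_z}\circ T^{d}$ from the later times therefore costs $\lVert G\rVert_{BV}$. This is of order $B^{d}$ for some $B>1$ depending on the number of branches, so up to $n^{c_0\log B}$. It is set against a decay of only $e^{-\tau(l_{i+2}-l_i)}$, where $l_{i+2}-l_i$ may be about $c_1\log n$. Nothing relates $\log B$ to $\tau$, so this error need not be small. Your alternative, carrying the BV norm only on a plain indicator, amounts to splitting inside the pair. That gives an error of order $e^{-\tau d}$, which does not decay for bounded $d$.

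\textbf{(iii)} The error is additive. The block contributes $\mu(B_y\cap T^{-d}B_z)+\mathrm{err}$, with $\mathrm{err}\approx\lVert G\rVert_{BV}e^{-\tau(l_{i+2}-l_i)}$. This error is not proportional to $\mu(B_y\cap T^{-d}B_z)$, whose $(y,z)$-average is only $O(n^{-2})$. So your displayed bound ``up to $(1+o(1))$ factors'' is not justified: these errors survive the averaging and must be paid for by the counting.

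The paper goes the other way and takes $c_0$ \emph{small}: $c_0<\frac14 c_1$ and $c_0\log B\le\frac12\tau c_1$. This resolves each problem.
\begin{itemize}
\item Close gaps are never adjacent, because $l_{i+2}-l_i>c_1\log n>2c_0\log n$. So the blocks really are pairs.
\item $\lVert G\rVert_{BV}\le n^{c_0\log B}$ is beaten by $n^{-\tau c_1}$. The remaining additive errors are $cn^{-\tau c_1/2}$, or $cn^{-\tau c_2/2}$ when $l_{i+2}-l_i>c_2\log n$.
\item In the first case the following gap $l_{i+2}-l_{i+1}$ lies in $(c_0\log n,c_2\log n]$. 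That costs another factor $1/n$ in the count, which compensates the error.
\end{itemize}
The price is that cross gaps in $(c_0\log n,c_2\log n]$ only receive the weak mixing error $n^{-\tau c_0}\gg 1/n$. Such configurations are rare, and they are exactly the source of the $n^{-\tau c_0+\varepsilon}$ in the upper bound of the proposition. To close your argument, you need to rebuild the block decomposition around this small-$c_0$ trichotomy.
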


Before we begin with the detailed proof of the proposition we give some intuition into the structure of $\int J_k J_j \, \mathrm{d}\mu$.
For the rest of the proof, suppose always that $k = (k_1,\ldots,k_{m_1})\in \tilde{K}_{m_1,n}$ and
$j = (j_1,\ldots,j_{m_2})\in \tilde{K}_{m_2,n}$, and fix the notation $\bar{m} := m_1 + m_2$. Consider the merged sequence of $k$ and $j$, that is $l := l(k,j) := (l_1, \ldots, l_{\bar{m}})$, the sequence containing all elements of $k$ and $j$ organized in increasing order. We have
\begin{align*}
  \int J_k J_j \mathrm{d} \mu
  &= \int \biggl(\int_{E_n} \ldots \, \mathrm{d}\mu(y) \biggr)
    \biggl(\int_{E_n} \ldots \, \mathrm{d}\mu(z) \biggr) \,
    \mathrm{d}\mu (x) \\
  &= \int_{E_n} \int_{E_n} \biggl( \int \mathbbm{1}_{B_{y \text{
    or } z}} (T^{l_1} x) \ldots \mathbbm{1}_{B_{y \text{ or } z}}
    (T^{l_{\bar{m}}} x) \, \mathrm{d} \mu(x) \biggr) \,
    \mathrm{d}\mu(y)\mathrm{d}\mu(z).  
\end{align*}
We will use exponential mixing on the inner integral, which we
denote by $H_l (y,z)$.  While we have reasonable separation (i.e.\ $c_1\log n$) between
consecutive numbers in $k$ and between consecutive numbers in
$j$, it is possible that a number in $j$ is very close to a
number in $k$. Consequently, consecutive numbers in $l$ might not be well-separated which complicates the application of exponential mixing.

We deal with this issue in the following way. For a sequence $s = (s_1,\ldots,s_{m})$ we will refer to the numbers $s_{i+1}-s_i$, $1\leq i\leq m-1$ as the \emph{gaps of $s$}. 
Let $l=l(k,j)\in K_{\bar{m},n}$ and denote by $l^g:=(l_{2}-l_1,\dots, l_{\bar{m}}-l_{\bar{m}-1})$ its \emph{associated gap sequence}.

Let $c_0<\frac14 c_1$ and denote by $p_1$ the number of gaps in $l$ of size less than $c_0 \log n$. Note that for such a gap $l_{i+1}-l_i$, one of $l_{i+1}, l_i$ must have come from $k$ and one from $j$ and such gaps can not be consecutive elements in $l^g$. In particular this means that $l_{i+2}-l_i>c_1\log n$ is always true since the gaps in $k$ and $j$ have this property. Denote by $p_2$ the number of gaps in $l$ of size between $c_0\log n$ and $c_2\log n$. Finally, denote by $p_3$ the number of gaps in $l$ of size larger than $c_2\log n$.

We now use exponential mixing to split $H_l (y,z)$ into a product of integrals. 
We apply exponential mixing repeatedly in the same way as earlier with one exception. If the first gap of $l$ is not counted by $p_1$ we proceed as usual, considering the left-most characteristic function to be the $BV$-function while the remaining product represents the $L^1$-function. In the case when the first gap of $l$ is counted by $p_1$ we consider the product of the two left-most characteristic functions to be the $BV$-function while the remaining product represents the $L^1$-function. We point out that the assumption that $T$ has at most finitely many branches is used here to ensure the validity of this step. We see that it plays an important role whether a gap is preceded by a $p_1$-gap. Hence, denote by $p_3^*$ the number of $p_3$-gaps that immediately follow a $p_1$-gap and let $p_2^*$ be defined analogously.
Note that $p_1=p_2^*+p_3^*$. Repeated exponential mixing applied to $H_l(y,z)$ would normally produce a product of $\bar{m}$ factors but since the $p_1$-gaps join two functions into one, we only get $\bar{m}-p_1$ factors. Once $H_l(y,z)$ has been split into a product it will contain factors of the forms listed below.

The first case is that the factor comes from a gap $l_{i+1} - l_i > c_2 \log n$ that is not immediately preceeded by a $p_1$-gap. There will be $p_3-p_3^*$ such gaps. We can
estimate this factor by
\[
  \biggl( \frac{t}{n} - \frac{c}{n^{\tau c_2}} \biggr)
  \quad \text{or} \quad
  \biggl( \frac{t}{n} + \frac{c}{n^{\tau c_2}} \biggr)
\]
depending on if we want a lower or upper bound.

Gaps with $l_{i+1}-l_i$ between $c_0\log n$ and $c_2\log n$ that are not immediately preceded by a $p_1$-gap results in factors that are estimated by
\[
0 \quad \text{or} \quad \biggl( \frac{t}{n} + \frac{c}{n^{\tau
		c_0}} \biggr) \leq \frac{c}{n^{\tau c_0}},
\]
depending on if we want a lower or upper bound. There will be $p_2-p_2^*$ many gaps of this type.

For the $p_1$ many gaps of size $l_{i+1}-l_i<c_0\log n$ the situation is slightly more complicated. Suppose $p_1\geq 1$. For the lower bound we can again use 0. The factors are the sum of an integral and an error term and the error term depends on the size of $l_{i+2}-l_i$ which is easily seen when writing out explicitly the iterative application of exponential mixing described above. If $l_{i+2}-l_i$ is between $c_1\log n$ and $c_2 \log n$ we get
\begin{equation}
  \label{eq:complicatedfactor}
  \biggl( \int \mathbbm{1}_{B_{y \text{ or } z}} (T^{l_{i+1}} x_s)
  \mathbbm{1}_{B_{y \text{ or } z}} (T^{l_{i}} x_s) \, \mathrm{d}
  \mu (x_s) + B^{l_{i+1} - l_i} \frac{c}{n^{\tau c_1 }} \biggr),\enskip s=1,\dots,p_1
\end{equation}
while if $l_{i+2}-l_i>c_2\log n$ we get
\begin{equation}
  \label{eq:complicatedfactor2}
  \biggl( \int \mathbbm{1}_{B_{y \text{ or } z}} (T^{l_{i+1}} x_s)
  \mathbbm{1}_{B_{y \text{ or } z}} (T^{l_{i}} x_s) \, \mathrm{d}
  \mu (x_s) + B^{l_{i+1} - l_i} \frac{c}{n^{\tau c_2 }} \biggr), \enskip s=1,\dots,p_1
\end{equation}
for some $B>1$, where the term $B^{l_{i+1} - l_i}$ comes from the bounded variation
norm of the product inside the integral. Note that in \eqref{eq:complicatedfactor} and \eqref{eq:complicatedfactor2}, in the product of two $\mathbbm{1}_{B_{y \text{ or } z}}$-functions, one will be $\mathbbm{1}_{B_{y}}$ and the other will be $\mathbbm{1}_{B_{z}}$ since the $p_1$ gaps always consist of one element from $k$ and one from $j$. 

We first give an estimate on the error term. Since $l_{i+1} - l_i \leq c_0
\log n$, we have $B^{l_{i+1} - l_i} \leq B^{c_0 \log n} = n^{c_0
  \log B}$. Choosing $c_0$ small enough, we then have the bounds
\[
  B^{l_{i+1} - l_i} \frac{c}{n^{\tau c_1 }} \leq
  \frac{c}{n^{\tau c_1 / 2}}
\]
and
\[
  B^{l_{i+1} - l_i} \frac{c}{n^{\tau c_2 }} \leq
  \frac{c}{n^{\tau c_2 / 2}}.
\]
which will prove to be sufficient.

We now give an estimate on the integral in
\eqref{eq:complicatedfactor} and
\eqref{eq:complicatedfactor2}. We have that
\begin{equation}
  \label{eq:JkJj}
  \int J_k J_j \, \mathrm{d} \mu \leq \iint \prod \ldots \,
  \mathrm{d} \mu(y) \mathrm{d}\mu (z)
\end{equation}
where the product consists of factors of the kind described above. We expand
the product as a sum of products, and in each term of the
sum, we change order of integration. There are only the
terms containing factors of the form
\eqref{eq:complicatedfactor} and \eqref{eq:complicatedfactor2}
  that contain integrals and in which the order of integration
  can be changed. The product of integrals that appear in such a term has the form
\begin{multline*}
 Q:= \int \ldots \int \biggl( \int_{E_n} 
  \mathbbm{1}_{B_y} (T^{a_1} x_1) \ldots \mathbbm{1}_{B_y}
  (T^{a_r} x_r) \, \mathrm{d}\mu(y) \\
  \times \int_{E_n}  \mathbbm{1}_{B_z} (T^{b_1} x_1)
  \ldots \mathbbm{1}_{B_z} (T^{b_r} x_r)  \,
  \mathrm{d}\mu(z) \biggr) \, \mathrm{d}\mu(x_1) \ldots
  \mathrm{d}\mu(x_r),
\end{multline*}
with $1\leq r\leq p_1$. Here the $a_i$'s and $b_i$'s represent $l_i$'s that appear in the argument of the $\mathbbm{1}_{B_y}$ and $\mathbbm{1}_{B_z}$ respectively. For simplicity these have been re-enumerated to coincide with the index of the integration variable $x_s$. The two innermost integrals, over $y$ and $z$, are treated in the
same way and we demonstrate the computation in the $y$-case. Making use of \eqref{eq:Lebesgue} twice and denoting by $d(\cdot,\dots,\cdot)$ the maximal distance between the points
in question we get
\begin{align*}
  \int_{E_n}  \mathbbm{1}_{B_y} (T^{a_1} x_1) &\ldots
  \mathbbm{1}_{B_y} (T^{a_r} x_r)  \, \mathrm{d}\mu(y)\\
  &\leq \int  \mathbbm{1}_{B(T^{a_1} x_1,r_n(y,t))} (y) \ldots
    \mathbbm{1}_{B(T^{a_r} x_r,r_n(y,t))} (y)  \, \mathrm{d}\mu(y)\\
    &\leq \int  \mathbbm{1}_{B(T^{a_1} x_1,C\frac{t}{n})} (y) \ldots
    \mathbbm{1}_{B(T^{a_r} x_r,C\frac{t}{n})} (y)  \, \mathrm{d}\mu(y)\\
&\leq\mu \Big(B \Big(T^{a_1}x_1,C\frac{t}{n}\Big)\Big)\mathbbm{1}_{\{d (T^{a_1}
	x_1,\ldots, T^{a_r} x_r) < 2 C\frac{t}{n}\}}\\
  &\leq c \biggl( \frac{t}{n} \biggr) \mathbbm{1}_{\{d (T^{a_1}
  	x_1,\ldots, T^{a_r} x_r) < 2 C\frac{t}{n}\}}.
\end{align*}
From the second to the third line, the multiplication by $\mathbbm{1}_{\{d (T^{a_1}
	x_1,\ldots, T^{a_r} x_r) < 2 C\frac{t}{n}\}}$ does not change anything, while bounding $r-1$ characteristic functions by $1$ gives the inequality. The same
estimate holds for the integral over $z$ although in this case we simply bound all $r-1$ characteristic functions by 1 and use the upper bound $c \frac{t}{n}$.

Thus, integrating over $x_1, \ldots, x_r$ we obtain that
\begin{align*}
 Q &\leq c \biggl( \frac{t}{n}
 \biggr)^{2} \int \ldots \int \mathbbm{1}_{\{d (T^{a_1}
  	x_1,\ldots, T^{a_r} x_r) < 2 C\frac{t}{n}\}} \, \mathrm{d}\mu(x_1) \ldots \mathrm{d}\mu(x_r) \\
  	&\leq  c \biggl( \frac{t}{n}
  	\biggr)^{2} \int \ldots \int \mathbbm{1}_{\{ \max_{2\leq i \leq r} d (T^{a_1}
  		x_1, T^{a_i} x_i) < 2 C\frac{t}{n}\}} \, \mathrm{d}\mu(x_1) \ldots \mathrm{d}\mu(x_r)\\
  		&=c \biggl( \frac{t}{n}
  		\biggr)^{2} \int \ldots \int \prod_{i=2}^r \mathbbm{1}_{B(T^{a_1}x_1, 2 C\frac{t}{n})}(T^{a_i}x_i) \, \mathrm{d}\mu(x_1) \ldots \mathrm{d}\mu(x_r)\\
 & \leq c \biggl( \frac{c}{n} \biggr)^{2 + (r-1)} .
\end{align*}
Here we used \eqref{eq:Lebesgue} again. This shows that after integrating over $y$ and $z$, the integrals given in
\eqref{eq:complicatedfactor} and \eqref{eq:complicatedfactor2} contribute at most $cn^{-(1+\frac{1}{r})}$ in the products described above, and since $r\leq p_1$ we get the general bound $cn^{-(1+\frac{1}{p_1})}$ on their contribution. Factorizing the sum of products back into one product, we see that we can bound the factors from \eqref{eq:complicatedfactor} and \eqref{eq:complicatedfactor2} by
\begin{equation*}
  \biggl(\frac{c}{n^{1 + \frac{1}{p_1}}} + \frac{c}{n^{\tau c_1
      \frac{1}{2}}} \biggr)\leq \frac{c}{n^{\tau c_1
      \frac{1}{2}}}
\end{equation*}
and
\begin{equation*}
	\biggl(\frac{c}{n^{1 + \frac{1}{p_1}}} + \frac{c}{n^{\tau c_2
			\frac{1}{2}}} \biggr)=\biggl(\frac{c}{n} + \frac{c}{n^{\tau c_2
			\frac{1}{2}-\frac{1}{p_1}}} \biggr)\frac{1}{n^{\frac{1}{p_1}}}\leq \biggl(\frac{c}{n} + \frac{c}{n^{\tau c_2
			\frac{1}{2}-1}} \biggr)\frac{c}{n^{\frac{1}{p_1}}}.
\end{equation*}
Set $\delta_2:=\tau c_2\frac{1}{2}-1$ and note that $\delta_2>1$. There are $p_2^*$ many instances of \eqref{eq:complicatedfactor} and $p_3^*$ instances of \eqref{eq:complicatedfactor2} in the product. We estimate the total contribution of \eqref{eq:complicatedfactor} and \eqref{eq:complicatedfactor2} as follows.
\begin{align*}
	\biggl ( \frac{c}{n^{\tau c_1 \frac{1}{2}}}&\biggr )^{p_2^*}\biggl (\biggl(\frac{c}{n} + \frac{c}{n^{\delta_2}} \biggr)\frac{c}{n^{\frac{1}{p_1}}}\biggr )^{p_3^*}\\
	&=\biggl ( \frac{c}{n^{\tau c_1 \frac{1}{4}}}\biggr )^{p_2^*}\biggl(\frac{c}{n} + \frac{c}{n^{\delta_2}} \biggr)^{p_3^*}     \biggl [ \biggl ( \frac{c}{n^{\tau c_1 \frac{1}{4}}}\biggr )^{p_2^*} \biggl (\frac{c}{n^{\frac{1}{p_1}}}\biggr)^{p_3^*}\biggr ]
\end{align*}
For the factor in the square bracket we consider two cases. Suppose $p_3^*=p_1$. Then $p_2^*=0$ and
\begin{equation*}
	 \biggl ( \frac{c}{n^{\tau c_1 \frac{1}{4}}}\biggr )^{p_2^*} \biggl (\frac{c}{n^{\frac{1}{p_1}}}\biggr)^{p_3^*}=\frac{c}{n}\leq \frac{c}{n^{\tau c_1 \frac{1}{4}}}.
\end{equation*}
 Suppose $p_3^*<p_1$. Then $p_2^*\geq 1$ and for $n$ sufficiently large,
 \begin{equation*}
 	 \biggl ( \frac{c}{n^{\tau c_1 \frac{1}{4}}}\biggr )^{p_2^*} \biggl (\frac{c}{n^{\frac{1}{p_1}}}\biggr)^{p_3^*}\leq  \frac{c}{n^{\tau c_1 \frac{1}{4}}} (1)^{p_3^*}\leq \frac{c}{n^{\tau c_1 \frac{1}{4}}}.
 \end{equation*}
So in either case we get the bound $\frac{c}{n^{\tau c_1 \frac{1}{4}}}$ and the estimate of the total contribution coming from \eqref{eq:complicatedfactor} and \eqref{eq:complicatedfactor2} becomes
\begin{equation*}
	\biggl ( \frac{c}{n^{\tau c_1 \frac{1}{4}}}\biggr )^{p_2^*}\biggl(\frac{c}{n} + \frac{c}{n^{\delta_2}} \biggr)^{p_3^*}\frac{c}{n^{\tau c_1 \frac{1}{4}}}.
\end{equation*}

We can now collect our observations to write up our estimates of $\int J_k J_j \,
\mathrm{d}\mu$. Note that going forward we will for simplicity convert all instances of $t$ into the generic constant $c$.  If $p_1 = p_2 = 0$, then we have
\begin{equation}
  \label{eq:JkJj-goodcase}
  \biggl( \frac{c}{n} - \frac{c}{n^{\tau c_2}} \biggr)^{\bar{m}}
  \mu (E_n)^2 \leq
  \int J_k J_j \, \mathrm{d}\mu \leq \biggl( \frac{c}{n} +
  \frac{c}{n^{\tau c_2}} \biggr)^{\bar{m}} \mu (E_n)^2.
\end{equation}

When $(p_1,p_2) \neq (0,0)$, we only need an upper
bound (and the trivial lower bound that $\int J_k J_j \, \mathrm{d}\mu$ is non-negative). Note that 
\begin{align*}
		p_3-p_3^*&=\bar{m}-2p_1 -(p_2-p_2^*)\\
		&=\bar{m}-p_1-p_2-p_3^*.
	\end{align*}

Collecting all of the above estimates we get for $p_1 > 0$
\begin{align}
  \label{eq:JkJj-badcase1}
  \int J_k J_j \, \mathrm{d}\mu &\leq \biggl(\frac{c}{n} + \frac{c}{n^{\tau c_2}}
  \biggr)^{p_3-p_3^*}  \biggl( \frac{c}{n^{\tau
      c_0}}\biggr)^{p_2-p_2^*}
   \biggl ( \frac{c}{n^{\tau c_1 \frac{1}{4}}}\biggr )^{p_2^*}\biggl(\frac{c}{n} + \frac{c}{n^{\delta_2}} \biggr)^{p_3^*}\frac{c}{n^{\tau c_1 \frac{1}{4}}}\nonumber\\
  &\leq \biggl(
  \frac{c}{n} + \frac{c}{n^{\delta_2}}
  \biggr)^{\bar{m}-p_1-p_2}  \biggl( \frac{c}{n^{\tau
  		c_0}}\biggr)^{p_2} \frac{c}{n^{\tau c_1 \frac{1}{4}}}
\end{align}  
and
\begin{equation}
  \label{eq:JkJj-badcase2}
  \int J_k J_j \, \mathrm{d}\mu \leq \biggl(
  \frac{c}{n} + \frac{c}{n^{\tau c_2}} \biggr)^{\bar{m}-p_2} \biggl( \frac{c}{n^{\tau c_0}}\biggr)^{p_2},
\end{equation}
when $p_1 = 0$. Note that in both \eqref{eq:JkJj-goodcase} and \eqref{eq:JkJj-badcase2} we can also replace $\tau c_2$ with $\delta_2$.

We will need to count how many pairs of sequences $k$ and $j$
there are for fixed numbers $p_1, p_2$. Note that fixing $p_1$ and $p_2$ uniquely determines $p_3$ when $\bar{m}$ is given. For this purpose we again need to introduce some notation. Let $\bar{K}_{m_1,m_2,n}:=\tilde{K}_{m_1,n}\times \tilde{K}_{m_2,n}$. In the following we will also be thinking of $l$ as the map $l:\bar{K}_{m_1,m_2,n}\to K_{\bar{m},n}$, $(k,j)\mapsto l(k,j)$, that is, the map which takes a pair of sequences and merges them as described earlier. Define $L_{m_1,m_2,n}:=l(\bar{K}_{m_1,m_2,n})$, i.e.\ the image of $\bar{K}_{m_1,m_2,n}$ under $l$. Suppose a given $l\in L_{m_1,m_2,n}$ has $p_1$ and $p_2$ many gaps of the types introduced earlier. We call the vector $g_p:=(p_1,p_2)$ the \emph{gap profile} of $l$ and define
\[
	L_{m_1,m_2,n,g_p}:=\{ l \in L_{m_1,m_2,n} \,:\, l \text{ has gap profile } g_p \}
\]
Similarly, define
\begin{align*}
	\bar{K}_{m_1,m_2,n,g_p}:&=l^{-1}(L_{m_1,m_2,n,g_p})\\
	&=\{ (k,j) \in \bar{K}_{m_1,m_2,n} \,:\, l(k,j) \text{ has gap profile } g_p \}.
\end{align*}
Henceforth, let $P = p_1 + p_2$.

\begin{lemma}
  \label{lem:secondcounting}
  We have
  \[
    \frac{\# \bar{K}_{m_1,m_2,n,g_p} }{ \# \tilde{K}_{m_1,n}
      \# \tilde{K}_{m_2,n} } \leq 2^{\bar{m}} c^{P} \frac{(\log
      n)^{P(1 + 2 \alpha)}}{ n^{P}}.
  \]
\end{lemma}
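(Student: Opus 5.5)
We count pairs $(k,j)$ by first counting merged sequences $l$ and then how many pairs produce each $l$, in the same spirit as Lemma~\ref{lem:firstcounting}. Given $l\in L_{m_1,m_2,n}$ with $\bar m$ entries, a preimage $(k,j)$ is determined by choosing which $m_1$ of the $\bar m$ entries of $l$ belong to $k$. (The entries of $k$ and $j$ could coincide; coinciding entries can be handled by allowing zero gaps, which only helps the bound below.) Hence each $l$ has at most $2^{\bar m}$ preimages, and
\[
\# \bar{K}_{m_1,m_2,n,g_p} \leq 2^{\bar m}\, \# L_{m_1,m_2,n,g_p}.
\]
It therefore suffices to bound the number of increasing sequences of length $\bar m$ in $\{1,\dots,n\}$ having exactly $P=p_1+p_2$ gaps of size at most $c_2\log n$. (The lower bound $c_0\log n$ for $p_2$-gaps is dropped, which only overcounts.)

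For this count we proceed exactly as in the proof of Lemma~\ref{lem:firstcounting}. First place the $\bar m - P$ elements that start a long gap or end the sequence, in at most $\binom{n}{\bar m-P}$ ways. Then insert each of the remaining $P$ elements within distance $c_2\log n$ of one of at most $\bar m$ already placed elements, giving at most $(\bar m c_2 \log n)^P$ choices. This yields
\[
\# L_{m_1,m_2,n,g_p} \leq \binom{n}{\bar m - P} (\bar m c_2\log n)^P .
\]
Next we compare with the denominator. By Lemma~\ref{lem:binomial}, $\#\tilde K_{m_i,n}\geq \tfrac12\binom{n}{m_i}$ for $n$ large, so $\#\tilde K_{m_1,n}\#\tilde K_{m_2,n}\geq \tfrac14\binom{n}{m_1}\binom{n}{m_2}$.

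The main obstacle is this comparison: the numerator counts subsets of size $\bar m-P$, while the denominator is a product of two binomials rather than $\binom{n}{\bar m}$. We use
\[
\binom{n}{m_1}\binom{n}{m_2}\geq \frac{(n-\bar m)^{\bar m}}{m_1!\,m_2!}
\quad\text{and}\quad
\binom{n}{\bar m-P}\leq \frac{n^{\bar m-P}}{(\bar m-P)!}.
\]
The ratio is then bounded by
\[
\frac{m_1!\,m_2!}{(\bar m-P)!}\,\frac{n^{\bar m -P}}{(n-\bar m)^{\bar m}}.
\]
Since $m_1!\,m_2!\leq \bar m!$, we have $\frac{m_1!\,m_2!}{(\bar m-P)!}\leq \bar m^P$. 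Since $\bar m\leq 2M_n\leq c(\log n)^\alpha$, we have $(n/(n-\bar m))^{\bar m}\leq (1+c(\log n)^\alpha/n)^{c(\log n)^\alpha}\to 1$, so this factor is bounded by a constant. Altogether the ratio is at most $c\,\bar m^{P}n^{-P}$. Multiplying by $(\bar m c_2\log n)^P$ and by the factor $2^{\bar m}$ from the preimage count gives
\[
2^{\bar m}\, c\, \bigl(\bar m^2 c_2 \log n\bigr)^P n^{-P}\leq 2^{\bar m} c^P\frac{(\log n)^{P(1+2\alpha)}}{n^P},
\]
using $\bar m^2\leq c(\log n)^{2\alpha}$ and absorbing the constant $4$ into $c^P$ (the case $P=0$ is trivial since the ratio is then at most $2^{\bar m}$ times a bounded factor, which we absorb by enlarging the constant or noting directly that $\#\bar K\leq \#\tilde K_{m_1,n}\#\tilde K_{m_2,n}$).
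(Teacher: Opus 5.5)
Your proof is correct and follows essentially the paper's route: bound the number of preimages of each merged sequence $l$ by $2^{\bar m}$, count merged sequences with $P$ short gaps exactly as in Lemma~\ref{lem:firstcounting}, and compare with $\#\tilde K_{m_1,n}\#\tilde K_{m_2,n}$ via Lemma~\ref{lem:binomial}. The differences are cosmetic. You compare $\binom{n}{\bar m-P}$ directly with $\binom{n}{m_1}\binom{n}{m_2}$ through factorial bounds, where your $m_1!\,m_2!\le \bar m!$ plays the role of the paper's $\binom{n}{\bar m}\le\binom{n}{m_1}\binom{n}{m_2}$. You also explicitly handle coinciding entries of $k$ and $j$ and the case $P=0$, which the paper passes over.
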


\begin{proof}
We first consider an upper bound on the number of pairs $(k,j)\in \bar{K}_{m_1,m_2,n,g_p}$ that merge to the same element $l\in L_{m_1,m_2,n,g_p}$. Given an $l$ and ignoring all gap restrictions, each element in $l$ has two options, namely stemming from $k$ or $j$. Since $l$ contains $\bar{m}$ elements, this gives the bound $\# \bar{K}_{m_1,m_2,n,g_p}\leq 2^{\bar{m}} \# L_{m_1,m_2,n,g_p}$. Recall also the general inequality $\binom{n}{m_1+m_2}\leq \binom{n}{m_1}\binom{n}{m_2}$. We then get
\begin{align*}
	\frac{\# \bar{K}_{m_1,m_2,n,g_p} }{ \# \tilde{K}_{m_1,n} \# \tilde{K}_{m_2,n} } &\leq \frac{2^{\bar{m}}\#L_{m_1,m_2,n,g_p}}{\binom{n}{\bar{m}}} \frac{\binom{n}{\bar{m}}}{\# \tilde{K}_{m_1,n} \# \tilde{K}_{m_2,n}}\\
	&\leq \frac{2^{\bar{m}}\#L_{m_1,m_2,n,g_p}}{\binom{n}{\bar{m}}} \frac{\binom{n}{m_1}\binom{n}{m_2}}{\# \tilde{K}_{m_1,n} \# \tilde{K}_{m_2,n}}\\
	&\leq c\frac{2^{\bar{m}}\#L_{m_1,m_2,n,g_p}}{\binom{n}{\bar{m}}}
\end{align*}
by Lemma~\ref{lem:binomial}. The rest of the proof is argued like in Lemma~\ref{lem:firstcounting}, that is, $\bar{m}-P$ elements are distributed freely among $n$ options and on the remaining $P$ we only impose the restriction that they must be within $c_2\log n$ of another element while we pretend that all $P$ elements have $\bar{m}$ neighborhoods in which they can be located. Repeating this argument gives the conclusion of the lemma. 
\end{proof}

We are now ready to prove Proposition~\ref{prop:JkJj-sum}.

\begin{proof}[Proof of Proposition~\ref{prop:JkJj-sum}]
  We start with the upper bound. Let $A:=\{0,\dots,\bar{m}-1\}^2$ and let $A_g\subset A$ denote the set of admissible gap profiles in $L_{m_1,m_2,n}$. Write $\bar{0}=(0,0)$. Denote the upper bound in \eqref{eq:JkJj-goodcase} by $B_1$ and the bounds in \eqref{eq:JkJj-badcase1} and \eqref{eq:JkJj-badcase2} by $B_2$ and $B_3$ respectively. The estimate is structured as follows
  \begin{align*}
  	 \sum_{\substack{k \in \tilde{K}_{m_1,n} \\j \in
  			\tilde{K}_{m_2,n} }}& \int J_k J_j \, \mathrm{d} \mu \\
  		&=  \sum_{g_p\in A_g}\sum_{(k,j) \in \bar{K}_{m_1, m_2,n, g_p}} \int J_k J_j \, \mathrm{d} \mu\\
  		&= \sum_{(k,j) \in \bar{K}_{m_1, m_2,n, \bar{0}}} \int J_k J_j \, \mathrm{d} \mu\\ 
  		&\hspace{1.5cm}+\sum_{\substack{g_p\in A_g\\p_1>0}}\sum_{(k,j) \in \bar{K}_{m_1, m_2,n, g_p}} \int J_k J_j \, \mathrm{d} \mu\\
  		&\hspace{3cm}+\sum_{\substack{g_p\in A_g\\p_1=0, p_2>0}}\sum_{(k,j) \in \bar{K}_{m_1, m_2,n, g_p}} \int J_k J_j \, \mathrm{d} \mu\\
  		&\leq \sum_{(k,j) \in \bar{K}_{m_1, m_2,n, \bar{0}}} B_1\\ 
  		&\hspace{1.5cm}+ \sum_{\substack{g_p\in A_g\\p_1>0}}\sum_{(k,j) \in \bar{K}_{m_1, m_2,n, g_p}} B_2\\
  		&\hspace{3cm}+\sum_{\substack{g_p\in A_g\\p_1=0, p_2>0}}\sum_{(k,j) \in \bar{K}_{m_1, m_2,n, g_p}} B_3\\
  		&= \#\bar{K}_{m_1, m_2,n, \bar{0}} B_1\\ 
  		&\hspace{1.5cm}+ \sum_{\substack{g_p\in A_g\\p_1>0}}\# \bar{K}_{m_1, m_2,n, g_p} B_2\\
  		&\hspace{3cm}+\sum_{\substack{g_p\in A_g\\p_1=0, p_2>0}} \# \bar{K}_{m_1, m_2,n, g_p} B_3\\
  		&\leq \#\bar{K}_{m_1, m_2,n} 2^{\bar{m}} B_1\\ 
  		&\hspace{1.5cm}+ \sum_{\substack{g_p\in A_g\\p_1>0}}\# \bar{K}_{m_1, m_2,n} 2^{\bar{m}} c^{P} \frac{(\log
  			n)^{P(1 + 2 \alpha)}}{ n^{P}}B_2\\
  		&\hspace{3cm}+\sum_{\substack{g_p\in A_g\\p_1=0, p_2>0}}\# \bar{K}_{m_1, m_2,n} 2^{\bar{m}} c^{p_2} \frac{(\log
  			n)^{p_2(1 + 2 \alpha)}}{ n^{p_2}} B_3
  \end{align*}
  by Lemma~\ref{lem:secondcounting}. Factoring out $\bigl(\frac{c}{n} + \frac{c}{n^{\delta_2}}
  \bigr)^{\bar{m}}$  in $B_1, B_2$ and $B_3$ and inserting the estimates from \eqref{eq:JkJj-goodcase}, \eqref{eq:JkJj-badcase1} and \eqref{eq:JkJj-badcase2} we get the total estimate
  \begin{multline*}
    \sum_{\substack{k \in \tilde{K}_{m_1,n} \\j \in
        \tilde{K}_{m_2,n} }} \int J_k J_j \, \mathrm{d} \mu \leq
    \#\bar{K}_{m_1, m_2,n} 2^{\bar{m}} \biggl(\frac{c}{n} + \frac{c}{n^{\delta_2}} \biggr)^{\bar{m}} \biggl[\mu (E_n)^2 + \\
    + \sum_{\substack{g_p\in A_g\\p_1>0}} c^{P} \frac{(\log n)^{P (1 + 2 \alpha)}}{ n^{P}} \biggl(\frac{c}{n}\biggr)^{-P}  \biggl(
    \frac{c}{n^{\tau c_0}} \biggr)^{p_2} \frac{c}{n^{\tau c_1 \frac{1}{4}}}\\
    + \sum_{\substack{g_p\in A_g\\p_1=0, p_2>0}} c^{p_2} \frac{(\log n)^{p_2 (1 + 2\alpha)}}{ n^{p_2}} \biggl(\frac{c}{n}\biggr)^{-p_2} \biggl(
    \frac{c}{n^{\tau c_0}} \biggr)^{p_2}  \biggr],
  \end{multline*}
	Here, it is only the sum over $p_1$ which requires further analysis. The sum
  	over $p_1$, taking out all factors depending on $p_2$,
  	is bounded by
  	\[
  	\sum_{p_1 = 1}^{M_n} c^{p_1}(\log n)^{p_1 (1 + 2 \alpha)}
  	\frac{1}{n^{\tau c_1 \frac{1}{4}}}.
  	\]
  	This is a geometric series with quotient
  	$(\log n)^{(1 + 2 \alpha)}$, and hence
  	\begin{multline*}
  		\sum_{p_1 = 1}^{M_n} c^{p_1}(\log n)^{p_1 (1 + 2 \alpha)}
  		\frac{1}{n^{\tau c_1 \frac{1}{4}}} \leq c^{M_n} \frac{(\log
  			n)^{M_n (1 + 2 \alpha)}}{n^{\tau c_1 \frac{1}{4}}} \leq c^{M_n} \frac{(\log
  			n)^{(\log
  				n)^\alpha (1 + 2 \alpha)}}{n^{\tau c_1 \frac{1}{4}}} \\
  		= \frac{c^{M_n}}{n^{\tau c_1 \frac{1}{4}}} \exp [ (\log
  		n)^\alpha (1 + 2 \alpha) \log \log n ] \leq \frac{c}{n^{{\tau
  					c_1 \frac{1}{4}} - \varepsilon}},
  \end{multline*}
for any arbitrarily small $\varepsilon>0$, where we used
\[
\frac{\exp [ (\log
  		n)^\alpha (1 + 2 \alpha) \log \log n ]}{n^{\varepsilon}} \to 0
\]
in the last step. For the record, the sum over $p_2$ is estimated by
  \[
    \sum_{p_2 = 1}^\infty c^{p_2} \frac{(\log n)^{p_2 (1 +
        \alpha)}}{n^{p_2 \tau c_0}} \leq c \frac{(\log n)^{1 +
        \alpha}}{n^{\tau c_0}},
  \]
 Hence, the (estimate of the) contribution from the $p_2$-sum is the worse
  (since we assumed $c_0 <  \frac{1}{4}c_1$), and we get
  in total using $\delta_2 >1$ that
  \begin{align*}
    \sum_{\substack{k \in \tilde{K}_{m_1,n} \\j \in
    \tilde{K}_{m_2,n} }} \int J_k J_j \, \mathrm{d} \mu
    & \leq \#\bar{K}_{m_1, m_2,n} 2^{\bar{m}} \biggl(\frac{c}{n} + \frac{c}{n^{\delta_2}}
      \biggr)^{\bar{m}} \biggl(\mu(E_n)^2 + \frac{c (\log n)^{1 +
      \alpha}}{n^{\tau c_0}} \biggr) \\
    & \leq \#\bar{K}_{m_1, m_2,n}  \biggl(\frac{c}{n} \biggr)^{\bar{m}}
      \biggl(1 + \frac{c }{n^{\delta_2-1}} \biggr)^{\bar{m}} \biggl(
      \mu(E_n)^2 + \frac{c (\log n)^{1 + \alpha}}{n^{\tau c_0}}
      \biggr) \\
    & \leq \#\bar{K}_{m_1, m_2,n}  \biggl(\frac{c}{n} \biggr)^{\bar{m}}
      \biggl( \mu(E_n)^2 + \frac{c
      (\log n)^{1 + \alpha}}{n^{\tau c_0}} \biggr).
  \end{align*}
  Hence, for any $\varepsilon > 0$, we have
  \begin{equation}
    \label{eq:correlationestimate}
    \sum_{\substack{k \in \tilde{K}_{m_1,n} \\j \in
        \tilde{K}_{m_2,n} }} \int J_k J_j \, \mathrm{d} \mu \leq
    \biggl(\mu(E_n)^2 + \frac{c}{n^{\tau c_0 - \varepsilon}}
    \biggr) \# \tilde{K}_{m_1,n} \# \tilde{K}_{m_2,n}
    \biggl(\frac{c}{n} \biggr)^{\bar{m}},
  \end{equation}
  which proves the upper bound in
  Proposition~\ref{prop:JkJj-sum}.

  We now consider the lower bound. Recall that $\bar{K}_{m_1,m_2,n}=\cup_{g_p\in A_g} \bar{K}_{m_1,m_2,n,g_p}$ where the union is disjoint. We then have by
  Lemma~\ref{lem:secondcounting} that for $n$ sufficiently large
  \begin{align*}
    \frac{\#\bar{K}_{m_1,m_2,n,\bar{0}}}{\# \bar{K}_{m_1,m_2,n}}&=1-\sum_{\substack{g_p\in A_g\\g_p\neq \bar{0}}}\frac{\#\bar{K}_{m_1,m_2,n,g_p}}{\#\bar{K}_{m_1,m_2,n}} \\
    &\geq 1 - 2^{\bar{m}}\sum_{\substack{g_p\in A_g\\g_p\neq \bar{0}}} c^P\frac{(\log n)^{P(1+2\alpha)}}{n^P}\\
    &\geq 1 - 2^{\bar{m}}2\biggl( \sum_{p_1=0}^{M_n} c^{p_1}\frac{(\log n)^{p_1(1+2\alpha)}}{n^{p_1}}\biggr)\biggl (\sum_{p_2=1}^{M_n} c^{p_2}\frac{(\log n)^{p_2(1+2\alpha)}}{n^{p_2}}\biggr )\\
    &\geq 1 - 2^{\bar{m}} c\frac{(\log n)^{(1+2\alpha)}}{n}.
  \end{align*}
   We thereby get that for any $\varepsilon>0$,
  \[
    \frac{\#\bar{K}_{m_1,m_2,n,\bar{0}}}{\# \bar{K}_{m_1,m_2,n}} \geq 1 - 2^{\bar{m}}c \frac{(\log
      n)^{1+2\alpha}}{n} \geq 1 - \frac{c}{n^{1-\varepsilon}}.
  \]
  By decomposing the sum as in the beginning of the proof and inserting the lower bound from \eqref{eq:JkJj-goodcase} as well as the trivial estimate
  $\int J_k J_j \, \mathrm{d} \mu \geq 0$ we get
  that
  \begin{align*}
    \sum_{\substack{k \in \tilde{K}_{m_1,n} \\j \in
    \tilde{K}_{m_2,n} }} \int J_k J_j \, \mathrm{d} \mu &\geq \mu (E_n)^2 \biggl( \frac{c}{n} - \frac{c}{n^{\tau
    c_2}} \biggr)^{\bar{m}} \#\bar{K}_{m_1,m_2,n,\bar{0}} \\
    & \geq \mu (E_n)^2 \biggl( \frac{c}{n} - \frac{c}{n^{\tau
      c_2}} \biggr)^{\bar{m}} \biggl( 1 -
      \frac{c}{n^{1-\varepsilon}} \biggr) \# \tilde{K}_{m_1,n} \#
      \tilde{K}_{m_2,n} \\
    & \geq \mu (E_n)^2 \biggl( \frac{c}{n} \biggr)^{\bar{m}}
      \biggl(1 - \frac{c}{n^{1-\varepsilon}} \biggr) \#
      \tilde{K}_{m_1,n} \# \tilde{K}_{m_2,n},
  \end{align*}
  which is the lower bound of Proposition~\ref{prop:JkJj-sum}.
\end{proof}

We can now finish the proof by giving an estimate on
$\Lambda_n$. Recall from \eqref{eq:numlambda} that
\begin{align*}
	&\Lambda_n	 \leq  \sum_{m_1,m_2=1}^{M_n} \bigg\vert\#\tilde{K}_{m_1,n} \#\tilde{K}_{m_2,n}
	\mu(E_n)^2 \biggl( \frac{c}{n} \biggr)^{\bar{m}}+ \sum_{\substack{k \in \tilde{K}_{m_1,n} \\j \in \tilde{K}_{m_2,n} }}\int J_k J_j \,
	\mathrm{d} \mu\nonumber\\
	& \hspace{3cm} - \mu(E_n) \#\tilde{K}_{m_2,n}\biggl( \frac{c}{n} \biggr)^{m_2}  \sum_{k\in \tilde{K}_{m_1,n}} \int J_k \,
	\mathrm{d} \mu \\
	& \hspace{3cm}- \mu(E_n) \#\tilde{K}_{m_1,n}\biggl(\frac{c}{n} \biggr)^{m_1} \sum_{j\in \tilde{K}_{m_2,n}} \int J_j \,
	\mathrm{d} \mu \bigg \vert.\nonumber
\end{align*}
From Propositions~\ref{prop:Ik-sum} and \ref{prop:JkJj-sum} we see that for some $e_1\in [- \frac{c}{n^{1-\varepsilon}}, \frac{c}{n^{\tau c_0-\varepsilon}}]$ and $e_2, e_3\in [- \frac{c}{n^{1-\varepsilon}}, \frac{c}{n^{\tau c_1-\varepsilon}}]$, we have
\begin{align*}
	\Lambda_n	 &\leq  \sum_{m_1,m_2=1}^{M_n} \#\tilde{K}_{m_1,n} \#\tilde{K}_{m_2,n}
	\mu(E_n)^2 \biggl( \frac{c}{n} \biggr)^{\bar{m}} \big\vert 1+(1+e_1)-(1+e_2)-(1+e_3)\big \vert\\
	&=\sum_{m_1,m_2=1}^{M_n} \#\tilde{K}_{m_1,n} \#\tilde{K}_{m_2,n}
	\mu(E_n)^2 \biggl( \frac{c}{n} \biggr)^{\bar{m}} \big\vert e_1 -(e_2+e_3)\big \vert\\
	&\leq \sum_{m_1,m_2=1}^{M_n} \#\tilde{K}_{m_1,n} \#\tilde{K}_{m_2,n}
	\mu(E_n)^2 \biggl( \frac{c}{n} \biggr)^{\bar{m}} \frac{c}{n^{\tau c_0-\varepsilon}}\\
	&\leq \frac{c}{n^{\tau c_0-\varepsilon}}
\end{align*}
since the sums over $m_1$ and $m_2$ are convergent.  This means that for all sufficiently large $n$
\[
  \Lambda_n \leq \frac{c}{n^s},
\]
for some $c > 0$ and $s = \tau c_0 - \varepsilon$. This completes the proof of Theorem~\ref{the:vts} by Proposition~\ref{prop:crit}.

\section{Proof of Theorem~\ref{the:vts_extension}}
\label{sec:vtsproof_extension}

In Theorem~\ref{the:vts} we showed that there is a set $G \subseteq [0,1]$ with $\mu(G)=1$ such that for all $x \in G$,
  \[
    \lim_{n \to \infty} \mu \{\, y : \tau_{B(y,r_n(y,t))} (x) >
    t/\mu(B(y,r_n(y,t))) \,\} = e^{-t},
  \]
  for all $t \ge 0$, where $r_n(\cdot, t)$ was chosen such that $\mu (B(y,r_n(y,t))) = t/n$. In this section, we want to generalize this to arbitrary sequences $(r_n)$ with $r_n \searrow 0$ to conclude Theorem~\ref{the:vts_extension}.

Fix $t>0$. Let $(r_n)$ be any sequence of positive reals with
$r_n \searrow 0$. We show for every $x \in G$:
\[
  \forall \varepsilon>0 \ \exists N \in \N \ \forall m \geq N:
  \abs{ \mu \left(\Meng{y}{\mu(B(y,r_m))\tau_{B(y,r_m)}(x) \geq
        t}\right)-\ee^{-t}}< \e.
\]  
Let $x \in G$ and $\e>0$. By continuity of $f(z)=\ee^{-z}$ there
is $\tilde{\delta} \in (0,1)$ such that for all
$\delta \in (0,\tilde{\delta})$:
\begin{equation}\label{eq:cont}
  \bigl| \ee^{-t(1\pm \delta)} - \ee^{-t} \bigr| < \e/8.
\end{equation}
Take $\delta \in (0, \tilde{\delta}/2)$. As in the main proof let
$r_n(y,s)$ such that $\mu(B(y,r_n(y,s)))=s/n$.

By the main theorem, there exists $N_{x,\e}$ such that for all
$n \geq N_{x,\e}$,
\[
  \bigl| \mu (\{\, y : \mu(B(y,r_n(y, t(1-\delta))))
  \tau_{B(y,r_n(y, t(1-\delta)))} (x) \geq t(1-\delta) \, \}
  )-\ee^{-t(1-\delta)} \bigr| < \frac{\e}{8},
\]
and there is $\tilde{N}_{x,\e}$ such that for all
$n \geq \tilde{N}_{x,\e}:$
\begin{multline*}
  \bigl| \mu ( \{\, y : \mu(B(y,r_n(y,
        t(1+2\delta))))\tau_{B(y,r_n(y, t(1+2\delta)))}(x) \geq
        t(1+2\delta) \, \} ) \\
        - \ee^{-t(1+2\delta)}  \bigr| < \frac{\e}{8}.
\end{multline*}
Let $N_{x,\e, \dd} \geq \max \{N_{x,\e},\tilde{N}_{x,\e}\}$ such
that
\begin{equation}
  \frac{n}{n+1} \geq 1-\dd \ \text{ for all } n \geq N_{x,\e,\dd}.
\end{equation}
For all $y$ and for all $m$ there is $n_m(y)$ such that
\begin{equation} \label{eq:order} r_{n_m(y)+1}(y,t(1-\dd)) \leq
  r_m < r_{n_m(y)}(y,t(1-\dd)).
\end{equation}
We have $\lim_{m \to \infty} \frac{1}{n_m(y)}=0$ pointwise for
all $y$. By Egorov's Theorem there exists $B \subset [0,1]$ with
$\mu(B)< \e/8$ and $M \in \N$ such that for all $m \geq M$:
\[
  \frac{1}{n_m(y)} < \frac{1}{N_{x,\e,\dd}} \ \text{ for all } y
  \in [0,1] \setminus B.
\]

Similarly, for all $y$ and for all $m$ there is $\ell_m(y)$ such
that
\begin{equation}
  \label{eq:order2}
  r_{\ell_m(y)+1}(y,t(1+2\dd)) \leq r_m < r_{\ell_m(y)}(y,t(1+2\dd))
\end{equation}
and $\lim_{m \to \infty} \frac{1}{\ell_m(y)}=0$ pointwise for all
$y$. By Egorov's Theorem there exists $\tilde{B} \subset [0,1]$
with $\mu(\tilde{B})< \e/8$ and $\tilde{M} \in \N$ such that for
all $m \geq \tilde{M}$:
\[
  \frac{1}{\ell_m(y)} < \frac{1}{N_{x,\e,\dd}} \ \text{ for all }
  y \in [0,1] \setminus \tilde{B}.
\]

Take $m \geq \max \{ M, \tilde{M} \}$.

Then we have by \eqref{eq:order} that
\begin{align*}
  \mu & ( \{\, y \in [0,1] : \mu(B(y,r_m)) \tau_{B(y,r_m)}(x)
        \geq t \,\} ) \\
      & \leq \mu (\{\, y \in [0,1]\setminus
        B : \mu(B(y,r_m))\tau_{B(y,r_m)}(x) \geq t \,\} ) +
        \mu(B) \\
      & \leq \mu (\{\, y \in [0,1]\setminus
        B : \mu(B(y,r_{n_m(y)}(y,t(1-\dd))))
        \tau_{B(y,r_{n_{m}(y)+1}(y,t(1-\dd)))}(x)
        \geq t \,\} ) \\
      & \hspace{11cm} + \mu(B) \\
      & = \mu \Bigl( \Bigl\{ \, y \in [0,1]\setminus
        B : \mu(B(y,r_{n_{m}(y)+1}(y,t(1-\dd))))
        \tau_{B(y,r_{n_{m}(y)+1}(y,t(1-\dd)))}(x) \\
      & \hspace{8.6cm} \geq {\textstyle \frac{n_m(y)}{n_m(y)+1}t}
        \,\Bigr\} \Bigr) + \mu(B).
\end{align*}
Since $n_m(y) \geq N_{x,\e,\dd}$ for $y \in [0,1]\setminus B$, we
have $\frac{n_m(y)}{n_m(y)+1} \geq 1-\dd $. Thus, we conclude
\begin{align*}
  \mu & ( \{\, y \in [0,1] : \mu(B(y,r_m))\tau_{B(y,r_m)}(x)
    \geq t \,\} ) \\
  & \leq \mu \Bigl( \Bigl \{\, y \in [0,1]\setminus
         B : \mu(B(y,r_{n_{m}(y)+1}(y,t(1-\dd))))
    \tau_{B(y,r_{n_{m}(y)+1}(y,t(1-\dd)))}(x) \\
  & \hspace{8.6cm} \geq (1-\dd)t \, \Bigr\} \Bigr) + \mu(B) \\
  & \leq \mu \Bigl( \Bigl \{\, y \in [0,1] : \mu(B(y,r_{n_{m}(y)+1}(y,t(1-\dd))))
    \tau_{B(y,r_{n_{m}(y)+1}(y,t(1-\dd)))}(x) \\
  & \hspace{8.6cm} \geq (1-\dd)t \, \Bigr\} \Bigr) + \mu(B) \\
  & \leq \ee^{-t \cdot (1-\dd)} + \frac{\e}{8}+\frac{\e}{8} <
         \ee^{-t} + \frac{3\e}{8},
\end{align*}
where we used equation \eqref{eq:cont} in the last estimate.

For the converse direction, we estimate with \eqref{eq:order2}
that
\begin{align*}
  \mu & (\{\, y \in [0,1] : \mu(B(y,r_m))\tau_{B(y,r_m)}(x)
        \geq t \,\} ) \\
      & \geq \mu (\{\, y \in [0,1]\setminus
        \tilde{B} : \mu(B(y,r_m))\tau_{B(y,r_m)}(x) \geq
        t \,\} )  \\
      & \geq \mu \bigl( \bigl\{ \, y \in [0,1]\setminus
        \tilde{B} : \mu(B(y,r_{\ell_m(y)+1}(y,t(1+2\dd))))
        \tau_{B(y,r_{\ell_{m}(y)}(y,t(1+2\dd)))}(x) \\
      & \hspace{11cm} \geq t \, \bigr\} \bigr) \\
      & = \mu \Bigl ( \Bigl\{ \, y \in [0,1]\setminus
        \tilde{B} : \mu(B(y,r_{\ell_{m}(y)}(y,t(1+2\dd))))
        \tau_{B(y,r_{\ell_{m}(y)}(y,t(1+2\dd)))}(x) \\
      & \hspace{9cm} \geq \frac{\ell_m(y)+1}{\ell_m(y)}t \,
        \Bigr\} \Bigr) .
\end{align*}
Since $\ell_m(y) \geq N_{x,\e,\dd}$ for
$y \in [0,1]\setminus \tilde{B}$, we have
\[
  \frac{\ell_m(y)+1}{\ell_m(y)} \leq \frac{1}{1-\delta} = 1 +
  \frac{ \delta}{1-\delta} \leq 1+2\dd.
\]
Thus we conclude
\begin{align*}
  \mu & ( \{\, y \in [0,1] : \mu(B(y,r_m))\tau_{B(y,r_m)}(x)
        \geq t \,\}) \\
      & \geq \mu \Bigl( \Bigl \{\, y \in [0,1]\setminus
        \tilde{B} : \mu(B(y,r_{\ell_{m}(y)}(y,t(1+2\dd))))
        \tau_{B(y,r_{\ell_{m}(y)}(y,t(1+2\dd)))}(x) \\
      & \hspace{9cm} \geq (1+ 2\dd)t \, \Bigr\} \Bigr)  \\
      & \geq \mu \Bigl( \Bigl\{ \, y \in
        [0,1] : \mu(B(y,r_{\ell_{m}(y)}(y,t(1+2\dd))))
        \tau_{B(y,r_{\ell_{m}(y)}(y,t(1+2\dd)))}(x) \\
      & \hspace{8cm} \geq (1+ 2\dd)t \, \Bigr\} \Bigr) -
        \mu(\tilde{B}) \\
      & \geq \ee^{-t \cdot (1+2\dd)} - \frac{\e}{8}-\frac{\e}{8}
        > \ee^{-t} - \frac{3\e}{8},
\end{align*}
where we again used equation \eqref{eq:cont} in the last
estimate.

Altogether we showed
\[
  \abs{ \mu (\{\, y : \mu(B(y,r_m))\tau_{B(y,r_m)}(x) \geq t\,\}
    )-\ee^{-t}}< \frac{3\e}{4},
\]
which finishes the proof.

\section{Proof of Theorem~\ref{the:hts}}

Similarly to the proof of Theorem~\ref{the:vts} we put
\[
  F_n (y) := \mu \{\, x : \tau_{B(y,r_n)} (x) > t / \mu (B(y,r_n))
  \,\},
\]
where $r_n$ is chosen such that $\mu (B(y,r_n)) = t/n$. We have
\begin{align*}
 F_n (y) & = \int \mathbbm{1}_{A_n} (x,y) \, \mathrm{d} \mu (x) =
  1 + \sum_{m=1}^n (-1)^m \int S_{m,n} (x,y) \, \mathrm{d}\mu (x) \\
  &= 1 + \sum_{m=1}^n (-1)^m \int \tilde{S}_{m,n} (x,y) \, \mathrm{d}\mu (x),
\end{align*}
where we used assumption~\eqref{eq:NoReturnCond} in the last step.
We define $M_n^\mathrm{odd}$ and $M_n^\mathrm{even}$ as in the
proof of Theorem~\ref{the:vts} and define $G_n^\mathrm{odd}$ and
$G_n^\mathrm{even}$ by
\[
  G_n^\mathrm{odd} (y) =
  1 + \sum_{m=1}^{M_n^\mathrm{odd}} (-1)^m\int \tilde{S}_{m,n} (x,y) \, \mathrm{d}\mu (x),
\]
and similarly for $G_n^\mathrm{even}$.
By the inclusion--exclusion principle, we have
\[
  G_n^\mathrm{odd} \leq F_n \leq G_n^\mathrm{even}.
\]
The goal is to prove that $\lim_{n \to \infty} G_n^\mathrm{odd}
(y) = \lim_{n \to \infty} G_n^\mathrm{odd} (y) = e^{-t}$ and from
this we conclude that $\lim_{n \to \infty} F_n (y) = e^{-t}$. The
analysis of $G_n^\mathrm{odd}$ and $G_n^\mathrm{even}$ is the
same, and from now on we only write $G_n$ to denote whichever one
of them.

As before we choose $c_2 > 4 \tau^{-1}>\tau^{-1} >c_1$. For $k \in \hat{K}_{m,n,p}$, we now estimate
\[
  P_k = \int \mathbbm{1}_{B_y} (T^{k_1} x) \ldots
  \mathbbm{1}_{B_y} (T^{k_m} x) \, \mathrm{d} \mu (x).
\]
If $k\in\hat{K}_{m,n,0}$, then we have by decay of
correlations that
\[
  \biggl(\frac{t}{n} - \frac{c}{n^{\tau c_2}} \biggr)^m
  \leq P_k \leq
  \biggl(\frac{t}{n} + \frac{c}{n^{\tau c_2}} \biggr)^m
\]
If $k\in \hat{K}_{m,n,p}$ for $p>1$, then we have that
\[
0\leq   P_k \leq
  \biggl(\frac{t}{n} + \frac{c}{n^{\tau c_2}} \biggr)^{m-p}
  \biggl(\frac{t}{n} + \frac{c}{n^{\tau c_1}} \biggr)^{p}.
\]
By Lemma~\ref{lem:firstcounting}, we have that the number of such
sequences is at most
\[
  \# \hat{K}_{m,n,p} \leq c^p \frac{(\log n)^{p (1 +
      2\alpha)}}{n^p} \# \tilde{K}_{m,n}.
\]
The number of sequences with $p=0$ is at least
\[
  \# \hat{K}_{m,n,0} \geq \# \tilde{K}_{m,n} - \sum_{p=1}^\infty
  c^p \frac{(\log n)^{p (1 + 2\alpha)}}{n^p} \# \tilde{K}_{m,n}
  \geq \biggl(1 - \frac{c}{n^{1-\varepsilon}} \biggr) \#
  \tilde{K}_{m,n}
\]
for any $\varepsilon >0$ if $n$ is sufficiently large. In the following we take $0<\varepsilon<\tau c_1$. 

These estimates now imply that
\begin{equation*}
  \sum_k P_k \geq \biggl(1 - \frac{c}{n^{1-\varepsilon}} \biggr)
  \biggl(\frac{t}{n} - \frac{c}{n^{\tau c_2}} \biggr)^m \#
  \tilde{K}_{m,n}.
\end{equation*}
Since $m \leq M_n$ we have by the Bernoulli inequality that
\begin{align*}
  \biggl( \frac{t}{n} - \frac{c}{n^{\tau c_2}} \biggr)^m
  & = \biggl(
    \frac{t}{n} \biggr)^m \biggl(1 - \frac{c/t}{n^{\tau c_2 - 1}}
    \biggr)^m \geq \biggl( \frac{t}{n} \biggr)^m \biggl(1 - \frac{c
    m}{n^{\tau c_2 - 1}} \biggr) \\
  & \geq \biggl( \frac{t}{n} \biggr)^m
    \biggl(1 - \frac{c}{n^{\tau c_2 - 1 - \varepsilon}} \biggr).
\end{align*}
Since $\tau c_2 >4$ we have
$1 - \varepsilon < \tau c_2 - 1 - \varepsilon$ and therefore 
\begin{equation}
  \label{eq:Pklowerbound}
  \sum_k P_k \geq \biggl(1 - \frac{c}{n^{1-\varepsilon}} \biggr)
  \biggl( \frac{t}{n} \biggr)^m \# \tilde{K}_{m,n}.
\end{equation}
As in \eqref{eq:expectationestimate}, we use similar estimates to conclude for large $n$, again using the Bernoulli inequality, that
\begin{align}
\nonumber
\sum_k P_k  & \leq \biggl(\frac{t}{n} + \frac{c}{n^{\tau c_2}} \biggr)^m  \biggl(1+c\frac{(\log n)^{1+2\alpha}}{n^{\tau c_1}}\biggr)   \# \tilde{K}_{m,n} \\
& \leq \biggl(1+\frac{c}{n^{\tau c_1-\varepsilon}}\biggr) \biggl( \frac{t}{n} \biggr)^m  \# \tilde{K}_{m,n}.\label{eq:Pkupperbound}
\end{align}

From the
estimates on $\sum_k P_k$ we then get that
\[
  \biggl| G_n (y) - \biggl[ 1 + \sum_{m=1}^{M_n} (-1)^m \biggl(
  \frac{t}{n} \biggr)^m \# \tilde{K}_{m,n} \biggr] \biggr| \leq
  \sum_{m = 1}^{M_n} \frac{c}{n^{\tau c_1-\varepsilon}} \biggl(
  \frac{t}{n} \biggr)^m \# \tilde{K}_{m,n} \leq
  \frac{c}{n^{\tau c_1-\varepsilon}} e^t.
\]
Since, by Lemma~\ref{lem:DeviationZero}, we have that
\[
  1 + \sum_{m=1}^{M_n} (-1)^m \biggl( \frac{t}{n} \biggr)^m \#
  \tilde{K}_{m,n} \to e^{-t}
\]
as $n \to \infty$, this proves that $G_n (y) \to e^{-t}$ as
$n \to \infty$.

An argument similar to the one in Section~\ref{sec:vtsproof_extension}, but simpler, allows us to extend this to all sequences $(r_n)$ with $r_n \searrow 0$ meaning that we have the convergence for $r\to 0$.

\section{Proof of Theorem~\ref{the:poisson}}

For $r>0$ we let
\[
  \phi_n (z) = \int \prod_{k = 1}^n ( 1 - z \mathbbm{1}_{B (y,r)}
    (T^k x)  ) \, \mathrm{d} \mu (x).
\]
Clearly, $\phi_n$ is a polynomial of degree at most $n$. We put
$\psi_n (z) = \phi_n (1 - z)$ and write
\[
  \psi_n (z) = a_{n,0} + a_{n,1} z + \ldots + a_{n,n} z^n.
\]

\begin{proposition}\label{prop:Coeff}
  We have that
  \[
    a_{n,p} = \mu \{\, x : p = \# \{\, j \leq n : T^j (x) \in
    B(y,r) \,\} \,\}.
  \]
\end{proposition}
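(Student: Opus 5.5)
The identity is pointwise before integration: we substitute $z \mapsto 1-z$ inside the integral and expand the product for each fixed $x$. Then we read off coefficients, using that integration over $x$ is linear in the coefficients of a polynomial of bounded degree.

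\emph{Step 1.} Fix $x$ and let $N(x) = \#\{\, j \leq n : T^j(x) \in B(y,r) \,\}$. Each factor of the product equals $1$ when $T^k x \notin B(y,r)$ and equals $1 - z$ when $T^k x \in B(y,r)$. Hence
\[
  \prod_{k=1}^n \bigl(1 - z \mathbbm{1}_{B(y,r)}(T^k x)\bigr) = (1-z)^{N(x)}.
\]
Replacing $z$ by $1-z$, the integrand of $\psi_n$ becomes $z^{N(x)}$. This gives
\[
  \psi_n(z) = \int z^{N(x)} \, \mathrm{d}\mu(x) = \sum_{p=0}^n \mu\{\, x : N(x) = p \,\}\, z^p .
\]
The second equality holds because $N$ is a measurable function taking only the values $0,\dots,n$, so the integral splits as a finite sum over its level sets.

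\emph{Step 2.} Comparing with $\psi_n(z) = \sum_p a_{n,p} z^p$ and using uniqueness of polynomial coefficients gives $a_{n,p} = \mu\{\, x : N(x) = p \,\}$, which is the claim.

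There is no real obstacle here. The only point needing care is measurability of $N$: it is a finite sum of the measurable functions $\mathbbm{1}_{B(y,r)} \circ T^k$, so it is measurable. With that, exchanging the finite sum and the integral is justified.
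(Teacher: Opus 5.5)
Your proof is correct and takes essentially the same route as the paper. The paper rewrites each factor of $\psi_n$ as $\mathbbm{1}_{\complement B(y,r)}(T^k x) + z\,\mathbbm{1}_{B(y,r)}(T^k x)$, expands the product and collects coefficients, whereas you evaluate the product pointwise as $z^{N(x)}$ directly, which makes reading off the coefficients slightly more transparent.
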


\begin{proof}
  We may write
  \begin{align*}
    \psi_n (z)
    &= \int \prod_{k = 1}^n ( 1 - (1 - z) \mathbbm{1}_{B (y,r)}
      (T^k x)  ) \, \mathrm{d} \mu (x) \\
    &= \int \prod_{k = 1}^n ( \mathbbm{1}_{B(y,r)} (T^k x) +
      \mathbbm{1}_{\complement B(y,r)} (T^k x) - (1 - z)
      \mathbbm{1}_{B (y,r)} (T^k x)  ) \, \mathrm{d} \mu
      (x) \\
    &= \int \prod_{k = 1}^n ( \mathbbm{1}_{\complement
      B(y,r)} (T^k x) + z \mathbbm{1}_{B (y,r)} (T^k x)  )
      \, \mathrm{d} \mu (x),
  \end{align*}
  from which the statement follows by expanding the product and collecting the coefficients for each power of $z$.
\end{proof}

As before we fix $t>0$ and take $r=r_n(y)$ such that $\mu(B(y,r))=t/n$. The strategy now is to prove that $\phi_n (z) \to e^{-tz}$ in a
strong enough sense that we can conclude, not only that $\psi_n
(z) \to e^{-t(1-z)}$, but that for each fixed $p$,
\[
  \lim_{n \to \infty} a_{n,p} = c_p,
\]
where $c_p = \frac{t^p e^{-t}}{p!}$. Note that these are the
coefficients in the power series for $e^{-t(1-z)}=e^{-t}\sum^{\infty}_{p=0}\frac{(tz)^p}{p!}$.

We first write
\[
  \phi_n (z) = b_{n,0} + b_{n,1} z + \ldots + b_{n,n} z^n,
\]
and prove that
\[
  \lim_{n \to \infty} b_{n,p} = \frac{(-t)^p}{p!}.
\]
Recall that by the power series definition of $e^z$, 
\[
  e^{-tz} = \sum_{p=0}^\infty d_p z^p, \quad d_p =
  \frac{(-t)^p}{p!}.
\]

To analyze the convergence of $\psi_n$, we need to connect the
coefficients of $\psi_n$ to those of $\phi_n$. We have that
\begin{align*}
  \psi_n (z)
  &= a_{n,0} + a_{n,1} z + \ldots + a_{n,n} z^n \\
  &= b_{n,0} + b_{n,1} (1-z) + \ldots + b_{n,n} (1-z)^n, \\
\end{align*}
so that, via the binomial formula, one obtains
\begin{equation}
  \label{eq:a-formula}
  a_{n,p} = \sum_{j = p}^n b_{n,j} (-1)^p \binom{j}{p}.
\end{equation}
Similarly, we have
\begin{equation}
  \label{eq:c-formula}
  c_p = \sum_{j = p}^\infty d_j (-1)^p \binom{j}{p}.
\end{equation}
We are going to estimate $|b_{n,j} - d_j|$ in order to get an
estimate on $|a_{n,p} - c_p|$.

As before, we let $\tilde{K}_{p,n}$ be the set of increasing sequences $k=(k_1,\dots, k_p) \in K_{p,n}$ such that $k_{j+1}-k_j > c_1 \log n$. By expanding the product in the definition of $\phi$ and applying assumption~\eqref{eq:NoReturnCond} we see that
\begin{align*}
  (-1)^p b_{n,p} & = \sum_{k \in K_{p,n}} \int
  \mathbbm{1}_{B(y,r)} (T^{k_1} x) \ldots \mathbbm{1}_{B(y,r)} (T^{k_p}
  x) \, \mathrm{d} \mu(x) \\
 & = \sum_{k \in \tilde{K}_{p,n}} \int
  \mathbbm{1}_{B(y,r)} (T^{k_1} x) \ldots \mathbbm{1}_{B(y,r)} (T^{k_p}
  x) \, \mathrm{d} \mu(x)
\end{align*}
We again choose $c_2>4\tau^{-1}>\tau^{-1}>c_1$. By \eqref{eq:Pkupperbound} and \eqref{eq:Pklowerbound} combined
with Lemma~\ref{lem:binomial}, we have
\[
  (-1)^p b_{n,p} \leq \biggl(\frac{t}{n}\biggr)^p
  \biggl(1 + \frac{c}{n^{\tau c_1-\varepsilon}} \biggr) \binom{n}{p}
  \]
  and  \[ (-1)^p b_{n,p} \geq \biggl(\frac{t}{n}\biggr)^p  \biggl( 1
  - \frac{c}{n^{1-\varepsilon}} \biggr) \binom{n}{p}.
\]

Therefore,
\begin{equation}
  \label{eq:bnpestimate}
  1 - \frac{c}{n^{1-\varepsilon}} \leq \frac{b_{n,p}}{(-t)^p
    \frac{1}{n^p} \binom{n}{p}} \leq 1 + \frac{c}{n^{\tau c_1-\varepsilon}}.
\end{equation}

For a fixed $p$, we have
\[
  \lim_{n \to \infty} \frac{1}{n^p} \binom{n}{p} = \frac{1}{p!}.
\]
In fact,
\begin{align*}
  \frac{1}{n^p} \binom{n}{p}
  & = \frac{1}{p!} \frac{n (n-1) \ldots
    (n - p + 1)}{n^p} = \frac{1}{p!} \biggl(1 - \frac{1}{n}
    \biggr) \ldots \biggl( 1 - \frac{p-1}{n} \biggr) \\
  & \geq \frac{1}{p!} \biggl(1 - \frac{p}{n}
    \biggr)^{p}.
\end{align*}
Hence, there exists a $c>0$ so that if $p < \frac{n}{2}$, then
\[
  \frac{1}{n^p} \binom{n}{p} \geq \frac{1}{p!} e^{p
    \log (1 - p/n)} \geq \frac{1}{p!} e^{- \frac{c p^2}{n}} \geq \frac{1 -
      \frac{c p^2}{n}}{p!}
\]
so that
\begin{equation}
  \label{eq:bnpestimate2}
  \biggl( 1 - \frac{c}{n^\frac{1}{2}} \biggr) \frac{1}{p!} \leq
  \frac{1}{n^p} \binom{n}{p} \leq \frac{1}{p!},
\end{equation}
provided $p \leq n^\frac{1}{4}$ and $n$ is large.  Combining
\eqref{eq:bnpestimate} and \eqref{eq:bnpestimate2}, we get that
\begin{equation}
  \label{eq:b-d-estimate}
  1 - \frac{c}{n^{\frac{1}{2}}} \leq \frac{b_{n,p}}{d_p}
  \leq 1 + \frac{c}{n^{\tau c_1-\varepsilon}}, \quad \text{ for } p \leq n^\frac{1}{4}.
\end{equation}

We can now use our obtained estimates to estimate
$|a_{n,p} - c_p|$. Using \eqref{eq:a-formula} and
\eqref{eq:c-formula}, we first write
\begin{align*}
  |a_{n,p} - c_p|
  & \leq \sum_{j = p}^{n} |b_{n,j} - d_j|
    \binom{j}{p} + \sum_{j = n+1}^\infty |d_j| \binom{j}{p} \\
  & \leq \sum_{j = p}^{n^\frac{1}{4}} |b_{n,j} - d_j|
    \binom{j}{p} + \sum_{j = n^\frac{1}{4} + 1}^n (|b_{n,j}| +
    |d_j|) \binom{j}{p} + \sum_{j = n+1}^\infty |d_j|
    \binom{j}{p} \\
  & = \sum_{j = p}^{n^\frac{1}{4}} |b_{n,j} - d_j|
    \binom{j}{p} + \sum_{j = n^\frac{1}{4} + 1}^n |b_{n,j}|
    \binom{j}{p} + \sum_{j = n^\frac{1}{4} + 1}^\infty |d_j|
    \binom{j}{p}.
\end{align*}
Let us estimate the three sums separately. From
\eqref{eq:b-d-estimate} we have that
\begin{align*}
  \sum_{j = p}^{n^\frac{1}{4}} |b_{n,j} - d_j| \binom{j}{p}
  & \leq \frac{c}{n^{\min\left(0.5,\tau c_1-\varepsilon\right)}} \sum_{j = p}^{n^\frac{1}{4}}
    \frac{t^j}{j!}  \binom{j}{p} \leq \frac{c}{n^{\min\left(0.5,\tau c_1-\varepsilon\right)}}
    \sum_{j = p}^{\infty} \frac{t^j}{p! (j-p)!} \\
    & \leq \frac{ct^p}{p!n^{\min\left(0.5,\tau c_1-\varepsilon\right)}}
    \sum_{j = p}^{\infty} \frac{t^{j-p}}{(j-p)!} \leq \frac{c
    t^p e^t}{p! n^{\min\left(0.5,\tau c_1-\varepsilon\right)}}.
\end{align*}
By \eqref{eq:bnpestimate}, we have $|b_{n,j}| \leq (1+c) t^j n^{-j}
\binom{n}{j}$. Therefore,
\begin{align*}
  \sum_{j = n^\frac{1}{4} + 1}^n |b_{n,j}| \binom{j}{p} & \leq
  \sum_{j = n^\frac{1}{4} + 1}^\infty (1+c) t^j n^{-j}
  \binom{n}{j} \binom{j}{p} \leq 
  \sum_{j = n^\frac{1}{4} + 1}^\infty (1+c) t^j n^{-j}
  \frac{n^j}{p!(j-p)!} \\
  & \leq \frac{c t^p}{p!} \sum_{j =
    n^\frac{1}{4} + 1}^\infty \frac{t^{j-p}}{(j-p)!}.
\end{align*}
The last sum is the remainder in a Taylor expansion of the
exponential function,
\[
  e^t = \sum_{j = p}^{n^\frac{1}{4}} \frac{t^{j-p}}{(j-p)!} + \sum_{j =
    n^\frac{1}{4} + 1}^\infty \frac{t^{j-p}}{(j-p)!}.
\]
We can now use the Lagrange form of the remainder and Stirling's formula to generously estimate
\[
\sum_{j = n^\frac{1}{4} + 1}^n |b_{n,j}| \binom{j}{p} \leq \frac{c
    t^p e^t}{p! n}.
\]
Finally, we have by the same reason that
\[
  \sum_{j = n^\frac{1}{4} + 1}^\infty |d_j| \binom{j}{p} =
  \frac{t^p}{p!} \sum_{j = n^\frac{1}{4} + 1}^\infty
  \frac{t^{j-p}}{(j-p)!} \leq \frac{c t^p e^t}{p! n}.
\]
Hence, there is a constant $C_p$ such that
\[
  |a_{n,p} - c_p| \leq C_p n^{-\min\left(0.5,\tau c_1-\varepsilon\right)}.
\]

By Proposition~\ref{prop:Coeff} this shows
\[
\lim_{n \to \infty} \mu \{\, x : p = \# \{\, j \leq n : T^j (x) \in
    B(y,r_n(y,t)) \,\} \,\} = \frac{t^p e^{-t}}{p!},
\]
where $r_n(y,t)$ is chosen such that
$\mu\left(B(y,r_n(y,t))\right)=t/n$. An argument similar to the
one in Section~\ref{sec:vtsproof_extension}, but simpler, allows
us to extend this to all sequences $(r_n)$ with $r_n \searrow 0$
meaning that we have the convergence for $r\to 0$.


\begin{thebibliography}{11}

\bibitem{BenedicksCarleson} M. Benedicks, L. Carleson, \emph{On
    iterations of $1-ax^2$ on $(-1,1)$}, Ann. of Math. (2) 122
  (1985), no. 1, 1--25.
  
\bibitem{Bruinetal} H. Bruin, B. Saussol, S. Troubetzkoy, S
  Vaienti, \emph{Return time statistics via inducing}, Ergodic
  Theory Dynam. Systems 23 (2003), no. 4, 991--1013.
  
\bibitem{ChazottesCollet} J.-R. Chazottes, P. Collet,
  \emph{Poisson approximation for the number of visits to balls
    in non-uniformly hyperbolic dynamical systems}, Ergodic
  Theory Dynam. Systems 33 (2013), no. 1, 49--80.

\bibitem{Collet} P. Collet, \emph{Some ergodic properties of maps
    of the interval}, Dynamical systems (Temuco, 1991/1992),
  55--91, Travaux en Cours, 52, Hermann, Paris, 1996.
  
\bibitem{Denkeretal} M. Denker, M. Gordin, A. Sharova, \emph{A
    Poisson limit theorem for toral automorphisms}, Illinois
  J. Math. 48 (2004), no. 1, 1--20.

\bibitem{FreitasFreitasTodd} A. Freitas, J. Freitas, M. Todd,
  \emph{Hitting time statistics and extreme value theory},
  Probab. Theory Related Fields 147 (2010), no. 3--4, 675--710.

\bibitem{Haydn} N. T. A. Haydn, \emph{Entry and return times
    distribution}, Dyn. Syst. 28 (2013), no. 3, 333--353.

\bibitem{Hirata} M. Hirata, \emph{Poisson law for Axiom A
    diffeomorphisms}, Ergodic Theory Dynam. Systems 13 (1993),
  533--556.
  
\bibitem{Hirataetal} M. Hirata, B. Saussol, S. Vaienti,
  \emph{Statistics of return times: a general framework and new
    applications}, Comm. Math. Phys. 206 (1999), no. 1, 33--55.
    
\bibitem{HofbauerKeller} F. Hofbauer, G. Keller, \emph{Ergodic
    properties of invariant measures for piecewise monotonic
    transformations}, Math. Z. 180 (1982), no. 1, 119--140.

\bibitem{Hollandetal} M. Holland, M. Kirsebom, P. Kunde,
  T. Persson, \emph{Dichotomy results for eventually always
    hitting time statistics and almost sure growth of extremes},
  Trans. Amer. Math. Soc. 377 (2024), no. 6, 3927--3982.

\bibitem{HollandTodd} M. Holland, M. Todd, \emph{On
    distributional limit laws for recurrence}, Nonlinearity 38
  (2025), no. 7, Paper No. 075028.
  
\bibitem{Kac} M. Kac, \emph{On the notion of recurrence in
    discrete stochastic processes}, Bull. Amer. Math. Soc. 53
  (1947), 1002--1010.
  
\bibitem{Pitskel} B. Pitskel', \emph{Poisson limit law for Markov
    chains}, Ergodic Theory Dynam. Systems 11 (1991), no. 3,
  501--513.
  
\bibitem{Rychlik} M. Rychlik, \emph{Bounded variation and
    invariant measures}, Studia Mathematica 76 (1983), 69--80.
   
\bibitem{Saussol} B. Saussol, \emph{Absolutely continuous invariant
    measures for multidimensional expanding maps}, Isral Journal
  of Mathematics, 116 (2000), 223--248.

\bibitem{Thomine} D. Thomine, \emph{A spectral gap for transfer operators
    of piecewise expanding maps}, Discrete Contin. Dyn. Syst. 30
  (2011), no. 3, 917--944.
  
 \bibitem[Y]{Young92} L. S.\ Young, \emph{Decay of Correlations for Certain
	Quadratic Maps}, Communications in Mathematical Physics 146 (1992),
123--138.  

\end{thebibliography}
\end{document}